\numberwithin{equation}{section}
\theoremstyle{plain}
\newtheorem{thm}{Theorem}[section]
\newtheorem{lem}[thm]{Lemma}
\newtheorem{proposition}[thm]{Proposition}
\newtheorem{remark}[thm]{Remark}
\newcommand\Rn{{\mathbb R}^n}
\newcommand{\wt}[1]{\widetilde{#1}}
\newcommand{\Cinf}{\ensuremath{\mathcal{C}^\infty}}
\newcommand{\mb}[1]{\ensuremath{\mathbb{#1}}}
\newcommand{\N}{\mb{N}}
\newcommand{\R}{\mb{R}}
\newcommand{\C}{\mb{C}}
\newcommand{\lara}[1]{\langle #1 \rangle}
\newfont{\bigmath}{cmr12 at 13pt}
\newfont{\grecomath}{cmmi12 at 15pt}
\newcommand{\esp}{\mathrm{e}}
\newcommand{\beq}{\begin{equation}}
\newcommand{\eeq}{\end{equation}}
\newcommand{\eps}{\varepsilon}
\renewcommand{\Re}{\ensuremath{\mathrm{Re}}}
\title[On weakly hyperbolic equations with analytic principal part]{
On weakly hyperbolic equations with analytic principal part}
\author[Claudia Garetto]{Claudia Garetto}
\address{
  Claudia Garetto:
  \endgraf
  School of Mathematics
  \endgraf
  Loughborough University
  \endgraf
  Leicestershire, LE11 3TU
  \endgraf
  United Kingdom
  \endgraf
  {\it E-mail address} {\rm C.Garetto@lboro.ac.uk}
  }
\author[Michael Ruzhansky]{Michael Ruzhansky}
\address{
  Michael Ruzhansky:
  \endgraf
  Department of Mathematics
  \endgraf
  Imperial College London
  \endgraf
  180 Queen's Gate, London SW7 2AZ
  \endgraf
  United Kingdom
  \endgraf
  {\it E-mail address} {\rm m.ruzhansky@imperial.ac.uk}
  }
\thanks{The first author was supported by
the Imperial College Junior Research Fellowship.
The second author was supported by the
EPSRC Leadership Fellowship EP/G007233/1.
}
\date{}
\subjclass[2010]{Primary 35G10; 35L30; Secondary 46F05;}
\keywords{Hyperbolic equations, Gevrey spaces, ultradistributions.}
\begin{document}

\maketitle

\begin{abstract}
In this paper we show how to include low order terms in the $C^{\infty}$ well-posedness
results for weakly hyperbolic equations with analytic time-dependent coefficients.
This is achieved by doing a different reduction to a system from the previously used one.
We find the Levi conditions such that the $C^{\infty}$ well-posedness continues to hold.
\end{abstract}

\section{Introduction}

In this paper we study the Cauchy problem
\beq
\label{CP}
\displaystyle
\left\{
\begin{array}{cc}
M(t,D_t,D_x)u(t,x)=0,&\quad (t,x)\in(\delta,T+\delta)\times\R^n,\\
D^{j}_t u(t_0,x)=g_{j}(x),&\quad j=0,...,m-1,
\end{array}
\right.
\eeq
where $t_0\in(\delta, T+\delta)$, the equation
\[
M(t,D_t,D_x)u\equiv D^m_t u-\sum_{\substack{1\le j\le  m,\\ |\nu|\le j}} a_{\nu,j}(t)D^{m-j}_tD^\nu_x u=0
\]
is hyperbolic with $t$-dependent coefficients, analytic in the principal part, and
continuous in the lower order terms.

Equations of the form \eqref{CP} have been extensively studied in the literature.
If the equation \eqref{CP} is strictly hyperbolic and its coefficients are in the
H\"older class, $a_{\nu,j}\in C^{\alpha}$, $0<\alpha<1$, it was shown by the authors in
\cite[Remark 8]{GR:11} that the Cauchy problem \eqref{CP} is well-posed in
Gevrey classes $G^s(\Rn)$ provided that $1\leq s< 1+\frac{\alpha}{1-\alpha}$
(if $\alpha=1$, it is sufficient to assume the Lipschitz continuity of coefficients to get
the well-posedness in $G^s$ for all $s\geq 1$).
This extended to the general setting
the results for certain second order equations by Colombini, de Giorgi and Spagnolo
\cite{CDS} who have also shown
that the Gevrey index above is sharp. We also refer to \cite[Remark 16]{GR:11} for
the Gevrey-Beurling ultradistributional well-posedness for  $1\leq s\leq 1+\frac{\alpha}{1-\alpha}.$

Equations with higher regularity
$a_{\nu,j}\in C^{k}$ and lower order terms have been considered by
the authors in \cite{GR:12} in the
Gevrey classes, yielding also the well-posedness in $C^{\infty}$ in the case when
the coefficients $a_{\nu,j}$ of the principal part,
$|\nu|=j$, are analytic. There, the assumptions on \eqref{CP} have been formulated
in terms of the characteristic roots while the Levi conditions on lower order terms
can be expresses in terms of the coefficients of the operator $M$.
Also, in \cite{GR:12} the quasisymmetriser has been used while we use the
symmetriser in this paper.
We refer to \cite{GR:11} and \cite{GR:12} for a review of the existing literature
for this problem.

Recently, Jannelli and Taglialatela \cite{JT} treated the equation \eqref{CP} with analytic
coefficients, without lower order terms, proving the $C^{\infty}$ well-posedness under
assumptions that can be expressed entirely in terms of the coefficients of the operator
$M$. The purpose of this note is to show how to extend the result of
Jannelli and Taglialatela \cite{JT} to also include  lower order terms with Levi
conditions still formulated in terms of the coefficients of $M$. This will be
achieved in this paper by doing a different reduction of \eqref{CP} to the first
order system which will allow us to include the lower order terms in the energy.
Indeed, in the case of the homogeneous operators, the reduction used in
\cite{JT} was done to a system which is homogeneous of order one in $\xi.$
Such a reduction can not be used in the present context because the lower order
terms introduce singularities in the symbols at $\xi=0$. Instead, by employing
a reduction to a pseudo-differential system, also used in \cite{GR:11}, we are
able to avoid such singularities. Thus, the analysis in this note is based on a reduction
from \cite{GR:11}  combined with a number
of results from \cite{JT}, with a subsequent
treatment of lower order terms in the energy under Levi conditions introduced
below. An interesting feature is that the $C^{\infty}$ well-posedness holds for
analytic coefficients in the principal part and lower order terms which are only continuous.
We also give a result for bounded (and possibly discontinuous) lower order terms.

In \cite{CDS}, Colombini, de Giorgi and Spagnolo, and in \cite{CS}, Colombini
and Spagnolo gave examples of second order equations with time-dependent
coefficients which are not distributionally well-posed. In this paper,
we also prove the distributional well-posedness of \eqref{CP} in our setting.

In Section \ref{S:1} we introduce the notations and recall the results of \cite{JT}.
In Section \ref{S:2} we give our results. In Section \ref{S:3} we give the proofs,
and in Section \ref{S:4} we analyse the meaning of the assumptions on both
the principal part and lower order terms, and compare the obtained results with
those in \cite{GR:12}.

\section{Preliminaries}
\label{S:1}

We begin by recalling the theorem proved in \cite{JT} for the Cauchy problem
\beq
\label{CPJT}
\left\{
\begin{array}{cc}
L(t,\partial_t,\partial_x)u(t,x)=0,\quad \text{for $(t,x)\in(\delta,T+\delta)\times\R^n$},\\
\partial^j_t u(t_0,x)=g_j(x),\quad \text{for $x\in\R^n$, $j=0,...,m-1$},
\end{array}
\right.
\eeq
where $t_0\in(\delta,T+\delta)$, and
\[
L(t,\partial_t,\partial_x)u(t,x)=\partial^m_t u(t,x)-\sum_{1\le j\le  m, |\nu|=j}
a_{\nu,j}(t)\partial^{m-j}_t\partial^\nu_x u(t,x)
\]
is homogeneous of order $m$.
This requires some preliminary notions which are collected in the sequel.
Let
\[
P(t,\tau,\xi)=\tau^m-\sum_{1\le j\le  m, |\nu|=j} a_{\nu,j}(t)\xi^\nu\tau^{m-j}=
\tau^m-\sum_{j=1}^m h_j(t,\xi)|\xi|^j\tau^{m-j}
\]
be the symbol of $i^{-m} L$. This is also the principal symbol of $M$ in \eqref{CP}. Let
\beq
\label{AJT}
A_0(t,\xi)=\left(
    \begin{array}{ccccc}
      0 & 1 & 0 & \dots & 0\\
      0 & 0 & 1 & \dots & 0 \\
      \dots & \dots & \dots & \dots & 1 \\
      h_m & h_{m-1} & \dots & \dots & h_1 \\
    \end{array}
  \right)
\eeq
be the companion matrix of $P(t,\tau,\xi/|\xi|)$. By construction the matrix $A_0(t,\xi)$ is
homogeneous of oder zero in $\xi$,
 and the eigenvalues of $A_0(t,\xi)|\xi|$ are the characteristic roots
 $\tau_1(t,\xi)$,...,$\tau_m(t,\xi)$ of $P(t,\tau,\xi)$.

For the moment let us fix $t\in(\delta, T+\delta)$ and $\xi$ so that $P$ is a
polynomial in $\tau$ with constant coefficients. In \cite{J:89} Jannelli
constructed a real symmetric $m\times m$ matrix $Q$ which is weakly positive definite
if and only if $P$ is weakly hyperbolic. This $Q$ is called the
\emph{standard symmetriser} of $P$. Note that the entries of $Q$
are fixed polynomials functions of $h_1,..., h_m$ such that
\beq
\label{QA}
QA_0-A_0^\ast Q=0
\eeq
and
\[
\det Q =\prod_{1\le k<j\le m}(\tau_j-\tau_k)^2.
\]
Let $Q_j$ be the principal $j\times j$ minor of $Q$ obtained by removing the first $m-j$ rows and columns of $Q$ and let $\Delta_j$ its determinant. When $j=m$ we use the notations $Q$ and $\Delta$ instead of $Q_m$ and $\Delta_m$. The hyperbolicity of $P$ can be seen at the level of the symmetriser $Q$ and of its minors as stated in the following proposition (see \cite{J:09}).
\begin{proposition}
\label{propJan}
\leavevmode
\begin{itemize}
\item[(i)] $P$ is strictly hyperbolic if and only if $\Delta_j>0$ for all $j=1,...,m$.
\item[(ii)] $P$ is weakly hyperbolic if and only if there exists $r<m$ such that
    \[
    \Delta=\Delta_{m-1}=...=\Delta_{r+1}=0
    \]
    and $\Delta_r>0,...,\Delta_1>0$. (In this case there are exactly $r$ distinct roots).
\end{itemize}
\end{proposition}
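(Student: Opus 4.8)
The plan is to compute the minors $\Delta_j$ explicitly and then read off both statements from classical facts about Hermite's form. Fix $t$ and $\xi$, write $\tau_1,\dots,\tau_m$ for the roots of $P$ and $s_k=\sum_{i=1}^m\tau_i^k$ for their power sums (polynomials in $h_1,\dots,h_m$ by Newton's identities), and let $H=(s_{p+q-2})_{p,q=1}^m$ be the associated Hermite (Hankel) matrix. Using the explicit construction of the standard symmetriser together with the Cauchy--Binet formula, one checks that for every $j$
\[
\Delta_j=\sum_{\substack{S\subset\{1,\dots,m\},\ |S|=j}}\ \prod_{\substack{k,l\in S,\ k<l}}(\tau_l-\tau_k)^2,
\]
that is, $\Delta_j$ is the $j$-th leading principal minor of $H$ (the classical expansion of the Hankel determinant). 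I would then use three standard facts: (a) Sylvester's criterion, that a real symmetric matrix is positive definite precisely when all its leading principal minors are positive; (b) Hermite's theorem, that the rank of $H$ equals the number of distinct roots of $P$ and that the positive index of inertia of $H$ equals the number of distinct \emph{real} roots; and (c) that a real symmetric matrix of rank $\rho$ which has a positive definite principal submatrix of size $\rho$ has vanishing negative index of inertia, hence is positive semidefinite.

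Part (i) is then immediate. If $P$ is strictly hyperbolic the roots are real and pairwise distinct, so every summand in the displayed formula is strictly positive, whence $\Delta_j>0$ for all $j$. Conversely, if $\Delta_1,\dots,\Delta_m>0$ then all leading principal minors of $H$ are positive, so $H>0$ by (a); by (b) the number of distinct real roots is then $m$, so all roots are real and distinct, i.e.\ $P$ is strictly hyperbolic.

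For part (ii), first suppose $P$ is weakly hyperbolic with exactly $r<m$ distinct roots. The roots being real, every summand above is $\ge0$, and the summand attached to $S$ is positive exactly when the $\tau_k$, $k\in S$, are pairwise distinct; such an $S$ of size $j$ exists iff $j\le r$. Hence $\Delta_1,\dots,\Delta_r>0$ and $\Delta_{r+1}=\dots=\Delta_m=0$. For the converse, assume $\Delta_{r+1}=\dots=\Delta_m=0$ and $\Delta_r,\dots,\Delta_1>0$ with $r<m$. Let $\mu_1,\dots,\mu_s$ be the distinct (a priori complex) roots of $P$, with multiplicities $m_1,\dots,m_s$; by the displayed formula and the pigeonhole principle $\Delta_j=0$ for $j>s$, while for $j=s$ only the subsets choosing one index per distinct root contribute, so $\Delta_s=\big(\textstyle\prod_a m_a\big)\prod_{a<b}(\mu_b-\mu_a)^2\neq0$. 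Thus $s$ is the largest index with $\Delta_\bullet\neq0$; since by hypothesis that index is $r$, we get $s=r$. On the other hand $\Delta_1,\dots,\Delta_r>0$ forces all leading principal minors of the $r\times r$ leading block $H_r$ to be positive, so $H_r>0$ by (a); as $\operatorname{rank}H=s=r$ and $H$ contains the positive definite principal submatrix $H_r$ of size $r$, fact (c) gives $H\ge0$, and then (b) shows that the number of distinct real roots equals the positive index of inertia of $H$, namely $r=s$. Hence all $s$ distinct roots are real, i.e.\ $P$ is weakly hyperbolic, with exactly $r<m$ distinct roots.

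The step I expect to be the real difficulty is the converse in part (ii). The hypothesis only controls the (trailing) minors of the symmetriser, and for a non-hyperbolic $P$ the individual summands $\prod_{k<l\in S}(\tau_l-\tau_k)^2$ may be negative, so the sign pattern of the $\Delta_j$ alone does not force hyperbolicity; what makes the argument close is the extra input $\operatorname{rank}H=s=r$, obtained from the combinatorial formula, which upgrades $H_r>0$ to $H\ge0$ via (c) and only then allows Hermite's inertia statement to be applied. A more hands-on alternative is to diagonalise the quadratic form ${}^{t}xH_rx$ using the Vandermonde structure in the $\mu_a$ and note that each conjugate pair of non-real roots would produce an indefinite $2\times2$ contribution, contradicting $H_r>0$; this route is closer to the computations in \cite{J:89,J:09}.
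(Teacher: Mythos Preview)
The paper does not prove this proposition; it is quoted from \cite{J:09}. Your reconstruction is essentially correct and follows the classical route via the Hermite--Hankel form: once the trailing principal minors $\Delta_j$ of $Q$ are identified with the leading principal minors of $H=(s_{p+q-2})$ through the Cauchy--Binet expansion
\[
\Delta_j=\sum_{|S|=j}\ \prod_{\substack{k,l\in S\\ k<l}}(\tau_l-\tau_k)^2,
\]
part (i) reduces to Sylvester's criterion, and the forward direction of (ii) is the pigeonhole count you give. Your converse in (ii)---extracting $s=r$ from the explicit value $\Delta_s=(\prod_a m_a)\prod_{a<b}(\mu_b-\mu_a)^2$, then combining $\operatorname{rank}H=r$ with $H_r>0$ to force $H\ge0$, and finally reading off that all $r$ distinct roots are real---is the right mechanism and closes the argument.

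Two small points. First, your statement of fact (b) is not quite Hermite's theorem: in general the \emph{signature} of $H$, not its positive index of inertia, equals the number of distinct real roots; e.g.\ $P(\tau)=\tau^2+1$ gives $H=\operatorname{diag}(2,-2)$ with positive index $1$ but no real roots. This does not damage your proof, because you only invoke (b) after (c) has produced $H\ge0$, and at that point the signature and the positive index coincide; but the fact should be stated correctly. Second, the identification of $\Delta_j$ with the displayed sum is asserted rather than shown. Jannelli's $Q$ is not literally $H$: already in the $m=2$ formula for $Q$ in Section~\ref{S:4} one sees that $Q$ differs from $H$ by a row/column reversal and a diagonal sign conjugation, so one has to check that the \emph{trailing} principal minors of $Q$ match the \emph{leading} ones of $H$. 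This is the Bezout/Hankel relation alluded to on \cite[p.~998]{JT}; once it is verified, your argument is complete.
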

Clearly, when $t$ and $\xi$ vary in $(\delta, T+\delta)$ and $\R^n$, respectively, $\Delta_r$ becomes a function $\Delta_r(t,\xi)$ homogeneous of degree $0$ in $\xi$ and analytic in $t$. When $\Delta_r$ is not identically zero one can define the function
\[
\widetilde{\Delta}_r(t,\xi)=\Delta_r(t,\xi)+\frac{(\partial_t\Delta_r(t,\xi))^2}{\Delta_r(t,\xi)},
\]
which is homogeneous of degree $0$ in $\xi$ as well, analytic on the interval $(\delta, T+\delta)$. In addition, the following property holds for $\Delta$ and $\wt{\Delta}$: if $t\mapsto\Delta(t,\xi)$ vanishes of order $2k$ at
a point $t'$ then $t\mapsto\widetilde{\Delta}(t,\xi)$ vanishes of order $2k-2$ at $t'$.

Note that estimating the quotient $\lara{\partial_t QV,V}/\lara{QV,V}$ is equivalent to estimating
the roots of the generalised Hamilton-Cayley polynomial
\beq
\label{HamCay}
\det(\lambda Q-\partial_t Q)=\sum_{j=0}^m d_j(t)\lambda^{m-j}
\eeq
of $Q$ and $\partial_t Q$, where $d_0=\det Q$, $d_1=-\partial_t(\det Q)$, $d_m=(-1)^m \det (\partial_t Q)$ and, if $m\ge 2$, $d_2=\frac{1}{2}{\rm trace}(\partial_t Q\partial_t (Q^{{\rm co}}))$, where $Q^{{\rm co}}$ is the cofactor matrix of $Q$. We recall that the cofactor of $Q$ is the matrix with entries
$q^{{\rm co}}_{ij}=(-1)^{i+j}d_{ij}$, where $d_{ij}$ is the determinant of the submatrix obtained from $Q$ by removing the $i$-th row and the $j$-th column. Finally, from the known identity
\[
\lambda_1^2+\cdots +\lambda_m^2=\biggl(\frac{d_1}{d_0}\biggr)^2-2\frac{d_2}{d_0},
\]
valid for the roots $\lambda_j$, $j=1,\dots,m$, of the generalised Hamilton-Cayley polynomial, we see that $d_2$ plays a fundamental role when one wants to estimate $\lara{\partial_t QV,V}/\lara{QV,V}$. We call
\[
\psi(t,\xi):=d_2(t,\xi)
\]
defined as above the \emph{check function} of $Q$. Replacing $Q$ with $Q_j$ in the
definition of $d_2$ we define the check function $\psi_j(t,\xi)$ of $Q_j$.
Clearly, $\psi_j(t,\xi)$ is homogeneous of order zero in $\xi$.
Note that when $m=1$ the check function $\psi$ is set to be identically $0$.
We are now ready to state the $\Cinf$ well-posedness theorem of
Jannelli and Taglialatela given in \cite{JT}.

Since the purpose of this note is to describe the possibility of adding lower order terms to
$L$ we will avoid long technicalities and will focus on
the non-degenerate case, i.e.,  the case with $\Delta(\cdot,\xi)\not\equiv 0$ is
not identically zero in $(\delta, T+\delta)$.

We skip the treatment of the Cauchy problem \eqref{CPJT} in the degenerate case since it is
lengthy but remark that the analysis can be carried out in this case as well along the lines
of the analysis of the general case in \cite{GR:12}.
In the present context, it would make use of $\Delta_r(\cdot,\xi)$ and the corresponding
check function $\psi_r(t,\xi)$, where $r=r(\xi)$ is the greatest integer such that
$\Delta_r(\cdot,\xi)\not\equiv 0$ in $(\delta, T+\delta)$.
For more details on these for the case of homogeneous $L$,
see Theorem 1 and Section 3 in \cite{JT}.

\begin{thm}[\cite{JT}]
\label{theoJT}
Let $L(t,\partial_t,\partial_x)$ as in \eqref{CPJT} be a weakly hyperbolic homogeneous operator with analytic coefficients in $(\delta,T+\delta)$. Let $P(t,\tau,\xi)$ be the characteristic polynomial and
$A_{0}(t,\xi)$ the companion matrix of $P(t,\tau,\xi/|\xi|)$. Let $Q(t,\xi)$ be its symmetriser and $\psi(t,\xi)$ the check function of $Q(t,\xi)$. Let $[a,b]\subset(\delta, T+\delta)$.
Assume that there exists a constant $C>0$ independent of $\xi$ such that
\beq
\label{JT1}
|\psi(t,\xi)|\le C\widetilde{\Delta}(t,\xi)
\eeq
holds for all $t\in[a,b]$. Then, the Cauchy problem \eqref{CPJT} is $\Cinf$ well-posed in $[a,b]$.
\end{thm}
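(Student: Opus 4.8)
The statement being quoted from \cite{JT}, the plan is to recover it by the classical symmetriser energy method. First I would take the Fourier transform in $x$: $\hat u(t,\xi)$ then solves the scalar ODE $P(t,D_t,\xi)\hat u=0$ with $\xi\in\Rn$ as a parameter, and since $L$ is homogeneous of order $m$ this can be rewritten \emph{exactly} (no lower order remainder) as the first order system $D_t V=|\xi|A_0(t,\xi)V$, where $V=(\hat u,|\xi|^{-1}D_t\hat u,\dots,|\xi|^{-(m-1)}D_t^{m-1}\hat u)$ and $V(t_0,\xi)$ is a diagonal $|\xi|$-rescaling of $(\hat g_0,\dots,\hat g_{m-1})$. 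Next I would form the energy $E(t,\xi)=\lara{Q(t,\xi)V,V}$. Differentiating and using that $Q$ is real symmetric, the two contributions of $D_t V$ combine into a multiple of $\lara{(QA_0-A_0^{\ast}Q)V,V}$, which vanishes by \eqref{QA}; hence $\frac{d}{dt}E=\lara{\d_t Q\,V,V}$, and the whole matter reduces to controlling the ratio $\lara{\d_t Q\,V,V}/\lara{QV,V}$. Diagonalising the generalised eigenvalue problem for the pair $(\d_t Q,Q)$ bounds this ratio in modulus by $\bigl(\sum_{j=1}^m\lambda_j^2\bigr)^{1/2}$, with the $\lambda_j$ the roots of \eqref{HamCay}; by the identity recalled in Section~\ref{S:1} this equals $\bigl|(\d_t\Delta/\Delta)^2-2\psi/\Delta\bigr|^{1/2}$, so everything is now an estimate for a scalar function of $t$.

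The crux is that near a point $t'$ where $\Delta(\cdot,\xi)$ vanishes — i.e.\ where $P$ has a multiple characteristic root — the expression $(\d_t\Delta/\Delta)^2-2\psi/\Delta$ is not integrable in $t$, so the plain energy $E$ must be corrected. Here analyticity is used: the entries of $Q$ are fixed polynomials in the analytic functions $h_1,\dots,h_m$, so $t\mapsto\Delta(t,\xi)$ is analytic and, restricting $\xi$ to the unit sphere (all the relevant objects being homogeneous of degree $0$ in $\xi$) and using compactness, the zeros of $\Delta(\cdot,\xi)$ in $[a,b]$ are finite in number and of order bounded by some $2k_0$, uniformly in $\xi$. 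One then works with a perturbed energy $E_{\eps}(t,\xi)=\lara{(Q(t,\xi)+\eps^{2}R(t,\xi))V,V}$, with $R\ge 0$ a fixed matrix and $\eps=\eps(\lara{\xi})\to 0$ chosen so that $Q+\eps^{2}R$ is uniformly positive of size $\gtrsim\eps^{2(m-1)}$ while the commutator $(Q+\eps^2 R)A_0-A_0^{\ast}(Q+\eps^2 R)$ stays $O(\eps^{2})$ (equivalently, one splits $[a,b]$ into $\lara{\xi}$-dependent zones around the zeros of $\Delta$ and uses the trivial bound $\tfrac{d}{dt}\|V\|\le C|\xi|\|V\|$ on the small zones). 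On the complement of the small zones $\Delta$ is bounded below by a negative power of $\lara{\xi}$, so $(\d_t\Delta/\Delta)^2$ integrates to $O(\log\lara{\xi})$; condition \eqref{JT1}, together with the order-lowering property that $\wt{\Delta}$ vanishes to order $2k-2$ wherever $\Delta$ vanishes to order $2k$, is precisely what forces $2\psi/\Delta$ to be absorbed by $(\d_t\Delta/\Delta)^2+O(1)$ and not to spoil this; and the extra terms produced by $\eps^2 R$, which carry a factor $|\xi|$ coming from $A_0$, integrate to $O(1)$ once $\eps|\xi|$ is kept bounded. Gronwall's inequality then gives $E_{\eps}(t,\xi)\le C\lara{\xi}^{N}E_{\eps}(t_0,\xi)$ on $[a,b]$.

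Finally I would unwind the estimate. Since $\eps|\xi|\lesssim 1$, $E_{\eps}(t_0,\xi)$ is dominated by a fixed finite combination of $\lara{\xi}^{2\sigma}|\hat g_j(\xi)|^2$, while $E_{\eps}(t,\xi)$ controls $|\hat u(t,\xi)|^2$ once divided by the lower bound $\gtrsim\eps^{2(m-1)}\approx\lara{\xi}^{-\sigma'}$ for $Q+\eps^2 R$; hence $|\hat u(t,\xi)|\le C\lara{\xi}^{N'}\sum_{j}|\hat g_j(\xi)|$ on $[a,b]$, and the $t$-derivatives of $\hat u$ are recovered with bounds of the same type from the system. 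Integrating these Fourier-side inequalities yields $u\in\bigcap_{k}C^{k}([a,b];H^{s-k-N'})$ whenever $g_j\in H^{s}$, for every $s$ — well-posedness with finite loss of derivatives — and the standard approximation and duality arguments then upgrade this to $\Cinf$ well-posedness on $[a,b]$. I expect the second step to be the main obstacle: choosing $\eps(\lara{\xi})$ and the correction $R$ (equivalently, the widths of the zones) so that the logarithmic gain coming from \eqref{JT1} and the $\wt{\Delta}$-property exactly balances the loss introduced by the perturbation, uniformly in $\xi$ and simultaneously at all of the (finitely many, uniformly bounded order) zeros of $\Delta$.
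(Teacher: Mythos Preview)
Your zone-splitting alternative (the parenthetical remark) is exactly the route taken in \cite{JT} and in this paper: Kovalevskian energy $|V|^2$ on an $\eps$-small set $A_{\xi,\eps}$ around the zeros of $\Delta(\cdot,\xi)$, hyperbolic energy $\lara{QV,V}$ on the complement, Lemma~\ref{lemmaEGR} to convert \eqref{JT1} into the bound $\lara{\d_t QV,V}\le C\sqrt{\wt\Delta/\Delta}\,\lara{QV,V}\le C(1+|\d_t\Delta|/\Delta)\lara{QV,V}$, and finally $\eps\sim\lara{\xi}^{-1}$ to balance the two pieces. So the core of your plan is correct and matches the paper.

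Two points need more care. First, the perturbed-energy variant with a \emph{single} fixed matrix $R$ does not close as written: the commutator term $|\xi|\eps^2\lara{(RA_0-A_0^\ast R)V,V}$ has to be compared with $E_\eps$, but where $\Delta\sim\eps^{2q}$ the best available lower bound is $E_\eps\gtrsim\eps^{2q}|V|^2$ (via Lemma~\ref{lemmaGR}), so after using $\eps|\xi|\lesssim1$ the extra term is of size $\eps^{1-2q}E_\eps$, which blows up for $q\ge1$. Making this work requires the full quasi-symmetriser hierarchy rather than one $R$; the zone-splitting avoids this entirely, which is why \cite{JT} and this paper use it. Second, the uniformity in $\xi$ of the zone decomposition does not follow from compactness of the sphere alone, because the zeros of $\Delta(\cdot,\xi)$ can merge or split as $\xi$ varies and their multiplicities need not be locally constant. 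The precise statement you need --- a uniform bound $p$ on the number of intervals, a uniform exponent $q$ in the lower bound $\Delta\ge c\eps^{2q}\Vert\Delta\Vert_\infty$, and the integral estimate $\int_{[a,b]\setminus A_{\xi,\eps}}|\d_t\Delta|/\Delta\,dt\le c\log(1/\eps)$ --- is Proposition~4.1 of \cite{JT}, restated here as Proposition~\ref{propJT}, and its proof uses more than compactness.
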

We can write condition \eqref{JT1} in a different way by introducing the set $\Sigma(\xi)=\{t_1,...,t_{N(\xi)}:\, \Delta(t_j,\xi)=0\}$ and the function
\[
\mathcal{Z}(t,\xi)=\begin{cases} \prod_{j=1}^{N(\xi)}|t-t_j| & \text{if $\Sigma(\xi)=\{t_1,...,t_{N(\xi)}\}$},\\
1 & \text{if $\Sigma(\xi)=\emptyset$}.
\end{cases}
\]
Note that by the analyticity of $\Delta$ in $t$ it follows that the function $N(\xi)$ is locally bounded (see the proof of Lemma 6.2 in \cite{GR:12}). Using again the fact that $\Delta(t,\xi)$ is analytic in $t$ and homogeneous of order $0$ in $\xi$ one can prove that there exist constants $c_1>0$ and $c_2>0$ independent of $\xi$ such that
\[
c_1\frac{\Delta(t,\xi)}{\mathcal{Z}^2(t,\xi)}\le \widetilde{\Delta}(t,\xi)\le c_2\frac{\Delta(t,\xi)}{\mathcal{Z}^2(t,\xi)},
\]
for all $t\in(\delta,T+\delta)$ and $\xi\neq 0$. Hence, \eqref{JT1} can be reformulated as follows: there exists a constant $C>0$ independent of $\xi$ such that
\beq
\label{JT2}
\mathcal{Z}^2(t,\xi)|\psi(t,\xi)|\le C\Delta(t,\xi),
\eeq
for all $t\in[a,b]$ and $\xi\neq 0$. This extends to any space dimension the one-dimensional observation of Jannelli and Tagliatela made in \cite[p. 1000]{JT}.

\section{Results}
\label{S:2}

We are now ready to study the Cauchy problem \eqref{CP} or, in other words,
to add lower order terms to the equation in \eqref{CPJT}.
First, we describe the reduction of \eqref{CP} to
a system since we will be making use of the symmetriser of the corresponding companion
matrix. We rewrite the equation
\[
M(t,D_t,D_x)u\equiv D^m_t u-\sum_{\substack{1\le j\le  m,\\ |\nu|\le j}} a_{\nu,j}(t)D^{m-j}_t D^\nu_x u=0
\]
as
\[
D^m_t u(t,x)=\sum_{\substack{1\le j\le  m,\\ |\nu|=j}} a_{\nu,j}(t)D^{m-j}_t D^\nu_x u(t,x)+\sum_{\substack{1\le j\le  m,\\ |\nu|\le j-1}}a_{\nu,j}(t)D^{m-j}_t D^\nu_x u(t,x).
\]
First of all we perform the standard reduction to a system of pseudo-differential equations as in \cite{GR:11} by setting
\[
u_l=D_t^{l-1}\lara{D_x}^{m-l}u,
\]
with $l=1,...,m$, where $\lara{D_x}$ is the pseudo-differential operator with symbol
$\lara{\xi}=(1+|\xi|^2)^{\frac{1}{2}}$. This transformation makes the $m$th-order
equation above equivalent to the first order system
\begin{equation}\label{EQ:sys}
D_tU = A_1(t,D_x)U+B(t,D_x)U,
\end{equation}
where $U$ is the column vector with entries $u_l$, $A_1(t,D_x)$ has symbol matrix
\[
A_1(t,\xi)=\lara{\xi}A(t,\xi)=\lara{\xi}\left(
    \begin{array}{ccccc}
      0 & 1 & 0 & \dots & 0\\
      0 & 0 & 1 & \dots & 0 \\
      \dots & \dots & \dots & \dots & 1 \\
      a_{1} & a_{2} & \dots & \dots & a_m \\
    \end{array}
  \right),
\]
where
\[
a_j(t,\xi)=\sum_{|\nu|=m-j+1}a_{\nu,m-j+1}(t)\xi^\nu\lara{\xi}^{j-m-1},
\]
and $B(t,D_x)$ has symbol matrix
\[
B(t,\xi)=\left(
    \begin{array}{ccccc}
      0 & 0 & 0 & \dots & 0\\
      0 & 0 & 0& \dots & 0 \\
      \dots & \dots & \dots & \dots & 0 \\
      b_1 & b_2 & \dots & \dots & b_m \\
    \end{array}
  \right),
\]
with
\beq
\label{bi}
b_j(t,\xi)=\sum_{|\nu|\le m-j}a_{\nu,m-j+1}(t)\xi^\nu\lara{\xi}^{j-m}.
\eeq
The initial conditions $D^j_t u(t_0,x)=g_j(x)$, $j=0,...,m-1$, are transformed into
\[
u_l(t_0,x)=\lara{D_x}^{m-l}g_{l-1}(x),
\]
for $l=1,...,m$, and generate the column vector $U_0(x)$.
In the following theorem we use functions $\psi$ and $\widetilde{\Delta}$ which
have been defined at the beginning of Section \ref{S:1}.

\begin{thm}
\label{theoGRm}
Let $M(t,D_t,D_x)$ as in \eqref{CP} be a weakly hyperbolic operator with analytic
coefficients in $(\delta,T+\delta)$ in the principal part, and lower order terms
continuous in $t$. Let $Q(t,\xi)=\{q_{ij}(t,\xi)\}_{i,j=1}^{m}$
be the symmetriser of the matrix $A(t,\xi)$ of the principal part,
$\Delta$ its determinant and $\psi(t,\xi)$ its check function. Let
$\Delta(\cdot,\xi)\not\equiv 0$ in $(\delta, T+\delta)$ for all $\xi$ with $|\xi|\ge 1$ and let $[a,b]\subset(\delta, T+\delta)$.
Assume that
\begin{itemize}
\item[(i)]
there exists a constant $C_1>0$ such that
\beq
\label{GR1m}
|\psi(t,\xi)|\le C_1\widetilde{\Delta}(t,\xi)
\eeq
holds for all $t\in[a,b]$ and $|\xi|\ge 1$;
\item[(ii)] the Levi condition
\begin{equation}\label{EQ:Leco}
|(q_{im}b_j-\overline{b_i}q_{jm})(t,\xi)|\le c\,\Delta(t,\xi)
\end{equation}
holds for all $1\le i,j\le m$, $t\in[a,b]$ and $|\xi|\ge 1$.
\end{itemize}
Then the Cauchy problem \eqref{CP} is $\Cinf$ well-posed in $[a,b]$ with initial data at $t_0=a$, and it is also
well-posed in $\mathcal D'(\Rn).$
\end{thm}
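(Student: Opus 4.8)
I would reduce \eqref{CP} to the first order pseudo-differential system \eqref{EQ:sys}, diagonalise it in $\xi$ by the Fourier transform, and then adapt the energy estimate behind Theorem~\ref{theoJT}, carrying the lower order operator $B(t,D_x)$ along in the energy under the Levi condition \eqref{EQ:Leco}. The reason for starting from \eqref{EQ:sys} is that $A_1(t,\xi)$ and $B(t,\xi)$ depend only on $(t,\xi)$, so after the Fourier transform in $x$ the system decouples, for each fixed $\xi\in\Rn$, into the linear ODE
\[
\partial_t\widehat U(t,\xi)=\irm\bigl(A_1(t,\xi)+B(t,\xi)\bigr)\widehat U(t,\xi),\qquad \widehat U(a,\xi)=\widehat{U_0}(\xi),
\]
where $A_1(t,\xi)=\lara{\xi}A(t,\xi)$. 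It then suffices to produce constants $C,\sigma>0$ independent of $\xi$ with $|\widehat U(t,\xi)|\le C\lara{\xi}^{\sigma}|\widehat{U_0}(\xi)|$ for $t\in[a,b]$ and $|\xi|\ge1$, the range $|\xi|<1$ being trivial since there all the matrices are bounded. Undoing the substitution via $u=\lara{D_x}^{-(m-1)}u_1$ and the definition of $U_0$, such an estimate yields, in the usual way, $\Cinf$ well-posedness of \eqref{CP} with data at $t_0=a$, and, since the loss $\lara{\xi}^{\sigma}$ is only polynomial, also well-posedness in $\mathcal D'(\Rn)$.

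Fix $\xi$ with $|\xi|\ge1$ and let $Q=Q(t,\xi)$ be the symmetriser of $A(t,\xi)$: by weak hyperbolicity $Q\ge0$, it is analytic in $t$, $\det Q=\Delta(t,\xi)$, and $QA_1-A_1^*Q=\lara{\xi}(QA-A^*Q)=0$ by \eqref{QA}. I would then take the energy $E(t,\xi)$ of \cite{JT} built from $Q$ --- a (possibly regularised and weighted) quadratic form in $\widehat U$ satisfying $\lara{Q\widehat U,\widehat U}\lesssim E\lesssim|\widehat U|^2$ and $E\gtrsim\varepsilon|\widehat U|^2$, where $\varepsilon=\varepsilon(\xi)$ is a suitable negative power of $\lara{\xi}$. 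Differentiating $E$ along the ODE and using $QA_1=A_1^*Q$, the principal part contributes $\lara{\partial_t Q\,\widehat U,\widehat U}$ together with the terms produced by the regularisation; these are exactly the quantities estimated in the proof of Theorem~\ref{theoJT}, and under assumption (i), i.e. $|\psi|\le C_1\wt\Delta$ (equivalently \eqref{JT2}), together with $\Delta(\cdot,\xi)\not\equiv0$, they are dominated by $\Phi(t,\xi)\,E(t,\xi)$ with $\int_a^b\Phi(t,\xi)\,dt\le C_2+C_3\log\lara{\xi}$ uniformly in $\xi$; this is where the polynomial loss originates. The only difference with respect to \cite{JT} is that one works with the $\lara{\xi}$-normalised companion matrix $A(t,\xi)$ in place of the homogeneous $A_0$ --- which is precisely what keeps the symbols non-singular at $\xi=0$ --- but the symmetriser and the algebraic identities it satisfies are insensitive to this.

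The genuinely new contribution is the one coming from $B(t,D_x)$. Since $B(t,\xi)$ has only its last row non-zero, with entries $b_1,\dots,b_m$ as in \eqref{bi}, a one-line computation gives
\[
(QB-B^*Q)_{ij}(t,\xi)=\bigl(q_{im}b_j-\overline{b_i}\,q_{jm}\bigr)(t,\xi),\qquad 1\le i,j\le m,
\]
so that the Levi condition \eqref{EQ:Leco} says exactly that every entry of $QB-B^*Q$ is dominated by $c\,\Delta(t,\xi)$. On the other hand, because $Q(t,\xi)\ge0$ with $\|Q(t,\xi)\|$ uniformly bounded on $[a,b]\times\{|\xi|\ge1\}$, the chain $\lara{QV,V}\ge\mu_{\min}(Q)\,|V|^2\ge(\det Q)\,\|Q\|^{-(m-1)}|V|^2$ gives $\Delta(t,\xi)\,|V|^2\le C_0\,\lara{Q(t,\xi)V,V}$ with $C_0$ independent of $\xi$. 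Combining the two estimates,
\[
\bigl|\irm\lara{(QB-B^*Q)\widehat U,\widehat U}\bigr|\le c'\,\Delta(t,\xi)\,|\widehat U|^2\le c'C_0\,\lara{Q\widehat U,\widehat U}\le c''\,E(t,\xi),
\]
while the remaining regularisation terms in the $B$-part of $\frac{d}{dt}E$ are $O(\varepsilon|\widehat U|^2)$, which is harmless precisely because $B(t,\xi)$ is bounded rather than of order $\lara{\xi}$ and $E\gtrsim\varepsilon|\widehat U|^2$. Hence $\frac{d}{dt}E\le(\Phi(t,\xi)+C')E(t,\xi)$, and Gr\"onwall gives $E(t,\xi)\le e^{C'(b-a)}e^{\int_a^b\Phi}E(a,\xi)\le C\lara{\xi}^{\sigma}E(a,\xi)$; since $\varepsilon|\widehat U|^2\lesssim E\lesssim|\widehat U|^2$ and $E(a,\xi)\lesssim|\widehat U(a,\xi)|^2$, the desired bound follows, and with it the theorem.

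The step I expect to be the main obstacle is making the treatment of the lower order term robust: one has to verify that $QB-B^*Q$, and the extra terms the regularisation puts on top of it, are absorbed by $E$ uniformly in the regularisation parameter and in $\xi$, so that no loss is created beyond the polynomial one inherited from \cite{JT}. This is exactly why the Levi condition must be \eqref{EQ:Leco} --- a bound, by a multiple of $\det Q$, of the last column $q_{1m},\dots,q_{mm}$ of the symmetriser paired with $b_1,\dots,b_m$ --- since this is the one quantity the energy $\lara{Q\widehat U,\widehat U}$ is guaranteed to control. The remainder is the lengthy but quotable analysis of the principal part in \cite{JT}, here carried out for the normalised matrix $A(t,\xi)$.
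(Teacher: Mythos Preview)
Your proposal is correct and follows essentially the same approach as the paper: the reduction to the system, the computation $(QB-B^*Q)_{ij}=q_{im}b_j-\overline{b_i}\,q_{jm}$, the bound $\Delta(t,\xi)|V|^2\le C_0\lara{QV,V}$ via the smallest eigenvalue of $Q$, and the absorption of the $B$-contribution into the energy under \eqref{EQ:Leco} are exactly what the paper does. The only point to sharpen is that the paper (following \cite{JT}) does not use a single regularised energy but a piecewise one---$|V|^2$ on a small set $A_{\xi/|\xi|,\eps}\subset[a,b]$ of measure $\le\eps$ and $\lara{QV,V}$ on its complement, where Proposition~\ref{propJT} guarantees $\Delta\gtrsim\eps^{2q}$ and $\int|\partial_t\Delta|/\Delta\lesssim\log(1/\eps)$; then setting $\eps=\esp^{-1}\lara{\xi}^{-1}$ produces the polynomial loss, so no ``extra regularisation terms'' arise in the $B$-part.
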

One of the features of this result is that we can allow lower order terms to be complex-valued.
In Remark \ref{rem_B} we will comment on a small simplification of the Levi conditions (ii)
if the matrix $B$ is real.
In the case when lower order terms are discontinuous but still bounded, we have the following
counterpart of the result above. The proof is similar to that of Theorem 1.4 in \cite{GR:12}.

\begin{thm}
\label{theoGRm1}
Assume the conditions of Theorem \ref{theoGRm}, but instead of assuming that lower order
terms are continuous, assume only that they are bounded, i.e
that
$a_{\nu,j}\in L^{\infty}((\delta,T+\delta))$, for all $|\nu|\leq j-1$ with $1\le j\le m$. Then the statement
remains true for smooth Cauchy data provided that we replace the well-posedness conclusion
$u\in C^m([a,b];C^{\infty}(\R^n))$ by
\[
u\in C^{m-1}([a,b];C^{\infty}(\R^n))\cap W^{\infty,m}([a,b];C^{\infty}(\R^n)),
\]
where $W^{\infty,m}$ is the Sobolev space with $m$ derivatives in $L^{\infty}.$
\end{thm}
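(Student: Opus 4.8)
The plan is to repeat the proof of Theorem~\ref{theoGRm} essentially line by line, isolating the single step at which continuity of the lower order coefficients is used and weakening it to the $L^\infty$ hypothesis. First I would reduce \eqref{CP} to the first order pseudo-differential system \eqref{EQ:sys}, $D_tU=A_1(t,D_x)U+B(t,D_x)U$, and, on the Fourier side at fixed $\xi$ with $|\xi|\ge 1$, consider the symmetriser energy
\[
E(t,\xi)=\lara{Q(t,\xi)\widehat U(t,\xi),\widehat U(t,\xi)}
\]
(with the same treatment near the, for each $\xi$ finitely many, zeros of $\Delta(\cdot,\xi)$ as in the proof of Theorem~\ref{theoGRm}). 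Since now $a_{\nu,j}\in L^\infty((\delta,T+\delta))$ for $|\nu|\le j-1$, the entries $b_j(t,\xi)$ of $B(t,\xi)$ from \eqref{bi} satisfy $\sup_{t,\xi}|b_j(t,\xi)|<\infty$ but are merely bounded and measurable in $t$, while $A_1(t,\xi)$ and $Q(t,\xi)$ remain analytic in $t$. Consequently \eqref{EQ:sys} must be read in the Carath\'eodory sense and, for each $\xi$, $t\mapsto\widehat U(t,\xi)$ is its unique absolutely continuous solution.

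Next I would establish the energy inequality. For a.e.\ $t$ one has
\[
\partial_t E=\lara{(\partial_t Q)\widehat U,\widehat U}+2\Re\lara{Q(\partial_t\widehat U),\widehat U}=\lara{(\partial_t Q)\widehat U,\widehat U}+\irm\lara{(QB-B^\ast Q)\widehat U,\widehat U},
\]
using $\partial_t\widehat U=\irm(A_1+B)\widehat U$ and the symmetriser identity $QA_1-A_1^\ast Q=0$. The first term is controlled exactly as in Theorem~\ref{theoGRm}: under hypothesis~(i), via the check function $\psi$ and $\wt{\Delta}$, the quotient $\lara{(\partial_t Q)\widehat U,\widehat U}/\lara{Q\widehat U,\widehat U}$ is dominated by a function $K(t,\xi)$ with $\sup_\xi\int_a^b K(t,\xi)\,dt<\infty$, a step involving $B$ not at all. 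For the second term, only the last row of $B$ is nonzero and $Q$ is real symmetric, so $(QB-B^\ast Q)_{il}=q_{im}b_l-\overline{b_i}\,q_{lm}$, and the Levi condition \eqref{EQ:Leco} gives
\[
\bigl|\lara{(QB-B^\ast Q)\widehat U,\widehat U}\bigr|\le c\,m^2\,\Delta(t,\xi)\,|\widehat U|^2\le C\,\lara{Q\widehat U,\widehat U}=C\,E(t,\xi),
\]
where I use $\det Q=\Delta$ together with the uniform bound $\|Q(t,\xi)\|\le C_0$ to get $\lara{QV,V}\ge C_0^{-(m-1)}\Delta\,|V|^2$ wherever $\Delta>0$. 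The decisive point is that this last bound uses only $\sup_t|b_j(t,\xi)|<\infty$, i.e.\ boundedness of the $a_{\nu,j}$, and never their continuity. Thus $\partial_t E\le(K(t,\xi)+C)E$ for a.e.\ $t$, and Gronwall's lemma for absolutely continuous functions yields the same loss-of-derivatives estimate $E(t,\xi)\le C\lara{\xi}^{N}E(a,\xi)$, hence the same $C^\infty$ (not Gevrey) conclusion.

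Finally I would read off the conclusion, which differs from that of Theorem~\ref{theoGRm} in exactly one respect. The estimate gives $U\in C([a,b];H^s(\R^n))$ for every $s$, hence $U\in C([a,b];C^\infty(\R^n))$; differentiating \eqref{EQ:sys} and using that $A_1$ is continuous in $t$ while $B$ is only bounded gives $\partial_t U=\irm(A_1+B)U\in L^\infty([a,b];C^\infty(\R^n))$, so that $U\in C([a,b];C^\infty(\R^n))\cap W^{\infty,1}([a,b];C^\infty(\R^n))$ --- here, in contrast with Theorem~\ref{theoGRm}, $C^1$ in $t$ is replaced by $C^0\cap W^{\infty,1}$. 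Since $u_l=D_t^{l-1}\lara{D_x}^{m-l}u$, this means $D_t^{l-1}u\in C([a,b];C^\infty(\R^n))$ for $l=1,\dots,m$, i.e.\ $u\in C^{m-1}([a,b];C^\infty(\R^n))$, while the equation \eqref{CP} together with $a_{\nu,j}\in L^\infty$ and $D_t^k u\in C([a,b];C^\infty(\R^n))$ for $k\le m-1$ gives $D_t^m u\in L^\infty([a,b];C^\infty(\R^n))$, that is $u\in W^{\infty,m}([a,b];C^\infty(\R^n))$. Uniqueness follows by applying the same energy estimate to the difference of two solutions (equivalently, by Carath\'eodory uniqueness of the ODE at each $\xi$), and well-posedness in $\mathcal D'(\R^n)$ from the $H^s$ estimate, exactly as in Theorem~\ref{theoGRm}. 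The part demanding the most care --- the main obstacle --- is the bookkeeping verifying that in the proof of Theorem~\ref{theoGRm} continuity of the lower order coefficients is used only to promote $U\in C^0$ to $U\in C^1$ in $t$, and that the degeneracy of $Q$ at the zeros of $\Delta(\cdot,\xi)$ does not interact with the Levi term; once these are checked, the passage to merely bounded coefficients is automatic.
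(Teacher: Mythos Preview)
Your proposal is correct and is precisely the adaptation the paper has in mind: the paper gives no detailed argument for Theorem~\ref{theoGRm1}, only remarking that the proof is similar to that of Theorem~1.4 in \cite{GR:12}, and your rerun of the proof of Theorem~\ref{theoGRm}---reading the ODE at each fixed $\xi$ in the Carath\'eodory sense and observing that continuity of the $b_j$ enters only in the final passage from $U\in C^0$ to $U\in C^1$ in $t$---is exactly that adaptation. One small wording slip: the claim $\sup_\xi\int_a^b K(t,\xi)\,dt<\infty$ is stronger than what the proof of Theorem~\ref{theoGRm} actually yields (the integral of $|\partial_t\Delta|/\Delta$ over $[a,b]\setminus A_{\xi/|\xi|,\eps}$ is only $\le c_2\log(1/\eps)$, and the Kovalevskian zone contributes a factor $\esp^{c\eps\lara{\xi}}$, whence the polynomial loss after setting $\eps\sim\lara{\xi}^{-1}$); since you nonetheless state the correct final estimate $E(t,\xi)\le C\lara{\xi}^N E(a,\xi)$, this is purely cosmetic.
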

The same distributional conclusion as we had in Theorem \ref{theoGRm} also holds in
Theorem \ref{theoGRm1}, with the solution
\[
u\in C^{m-1}([a,b];\mathcal D'(\R^n))\cap W^{\infty,m}([a,b];\mathcal D'(\R^n)),
\]
provided that the Cauchy data are all in $\mathcal D'(\R^n).$

\section{Proofs}
\label{S:3}

Note that the eigenvalues of the matrix $A_1=\lara{\xi}A(t,\xi)$ are the roots of the characteristic polynomial $P(t,\tau,\xi)$ and that the entries of the matrix $A$ are related to the entries of the matrix $A_0$ in \eqref{AJT} by the formula  $a_j(t,\xi)\lara{\xi}^{m-j+1}=h_{m-j+1}(t,\xi)|\xi|^{m-j+1}$.
Applying the Fourier transform to the system \eqref{EQ:sys} we obtain the system
\beq
\label{eqV}
\begin{split}
D_t V(t,\xi)&=\lara{\xi}A(t,\xi)V(t,\xi)+B(t,\xi)V(t,\xi),\\
V(t_0,\xi)&= V_0(\xi),
\end{split}
\eeq
where $V=\mathcal{F}_{x\to\xi}U$ and $V_0=\mathcal{F}_{x\to\xi} U_0$.

Note that by performing a reduction to a system of pseudo-differential equations the symmetriser $Q(t,\xi)$, defined as in \cite[Section 3]{GR:12}, is a matrix of $0$-order symbols which can be expressed in terms of the rescaled roots $\tau_j(t,\xi){\lara{\xi}}^{-1}$ (or eigenvalues of the matrix $A$).
More precisely, the entries $q_{ij}$ of the symmetriser $Q(t,\xi)$ are polynomials in
$\tau_1{\lara{\xi}}^{-1},...,\tau_m{\lara{\xi}}^{-1}$.
Making use of the concept of the Bezout matrix associated to
the couple of polynomials $(P,\partial_\tau P)$ it is also possible to express the entries
of the symmetriser in terms of the coefficients $h_j$, $j=1,...,m$.
For further details we refer the reader to \cite[p. 998]{JT}.
We begin by proving some basic properties of the symmetriser which will be employed to prove
the $\Cinf$ well-posedness.

\subsection{The symmetriser $Q$}
It is useful to make a comparison between the symmetriser $Q$ of the matrix $A$ and the symmetriser $Q_0$ with homogeneous entries of order $0$ employed by Jannelli and Taglialatela in \cite{JT}. These two matrices are both symmetric with polynomial entries in $\tau_1\lara{\xi}^{-1}$,...,$\tau_m\lara{\xi}^{-1}$ and $\tau_1|\xi|^{-1}$,...,$\tau_m|\xi|^{-1}$, respectively, as defined in \cite[Section 3]{GR:12}. By construction, $QA-A^\ast Q=0$ and
\begin{multline}
\label{detq}
\Delta(t,\xi)=\prod_{1\le i<j\le m}(\tau_j(t,\xi)\lara{\xi}^{-1}-\tau_i(t,\xi)\lara{\xi}^{-1})\\
=\lara{\xi}^{-m(m-1)}\prod_{1\le i<j\le m}(\tau_j(t,\xi)-\tau_i(t,\xi))= \lara{\xi}^{-m(m-1)}|\xi|^{m(m-1)}\Delta_0(t,\xi),
\end{multline}
where $\Delta$ and $\Delta_0$ are the determinants of $Q$ and $Q_0$, respectively, and $\Delta_0$ is expressed in terms of the $0$-homogeneous roots $\tau_i/|\xi|$. The following lemma on symmetric positive semi-definite matrices will be in the sequel applied to $Q$.
\begin{lem}
\label{lemmaJT}
Let $N(t)$ be any symmetric positive semi-definite matrix with bounded coefficients on an interval $[a,b]$. Then, there exist two positive constants $c_1$ and $c_2$ depending only on the $L^\infty$-norm of the entries of $N(t)$ such that
\[
c_1\det N(t)|V|^2\le \lara{N(t)V,V}\le c_2|V|^2
\]
holds for all $t\in[a,b]$ and $V\in\C^m$.
\end{lem}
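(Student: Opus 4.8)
The plan is to reduce the two-sided bound to its two halves and handle each by a compactness/continuity argument. First I would establish the upper bound $\lara{N(t)V,V}\le c_2|V|^2$, which is the easy direction: since $N(t)$ is symmetric with entries bounded in absolute value by some constant $K$ on $[a,b]$, the Cauchy--Schwarz (or Gershgorin) estimate gives $|\lara{N(t)V,V}|\le \sum_{i,j}|n_{ij}(t)|\,|V_i|\,|V_j|\le K\big(\sum_i|V_i|\big)^2\le Km|V|^2$, so one may take $c_2=Km$, which depends only on the $L^\infty$-norm of the entries.

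The lower bound is the substantive part. The claim is that $\lara{N(t)V,V}\ge c_1\det N(t)\,|V|^2$ for a constant $c_1$ depending only on $\|N\|_{L^\infty}$. I would first reduce to the case of a \emph{single} symmetric positive semi-definite matrix $N$ with $\|N\|\le K$, proving that there is $c_1=c_1(K,m)$ with $\lara{NV,V}\ge c_1(\det N)|V|^2$ for all $V$; the statement for $N(t)$ then follows by applying this uniformly in $t$. For a fixed such $N$, diagonalise: let $0\le \lambda_1\le\dots\le\lambda_m$ be the eigenvalues, so $\lara{NV,V}\ge \lambda_1|V|^2$ and $\det N=\prod_k\lambda_k$. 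Since $\|N\|\le K$ we have $\lambda_k\le mK$ (or even $\le K$ up to a dimensional factor), hence $\det N=\lambda_1\prod_{k\ge 2}\lambda_k\le \lambda_1 (mK)^{m-1}$, which rearranges to $\lambda_1\ge (mK)^{-(m-1)}\det N$. Therefore $\lara{NV,V}\ge \lambda_1|V|^2\ge (mK)^{-(m-1)}(\det N)|V|^2$, and we may take $c_1=(mK)^{-(m-1)}$, depending only on $K$ and $m$. If $\det N=0$ the inequality is trivial since the left-hand side is nonnegative.

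Putting the two halves together: with $K$ the common bound on $|n_{ij}(t)|$ over $t\in[a,b]$ and $i,j$, the constants $c_1=(mK)^{-(m-1)}$ and $c_2=mK$ work simultaneously for every $t\in[a,b]$ and every $V\in\C^m$, which is exactly the assertion. The only mild subtlety, and the step I expect to require the most care, is the bound $\lambda_k\le mK$ on the eigenvalues in terms of the entrywise $L^\infty$-norm: this is a standard consequence of $|\lara{NV,V}|\le mK|V|^2$ (shown in the upper-bound step) applied to an eigenvector, so in fact the lower bound feeds off the upper bound and no separate spectral estimate beyond diagonalisation is needed. One should also note that positive semi-definiteness is used only to guarantee $\lambda_1\ge 0$ and $\det N\ge 0$, so that the chain of inequalities has the right signs; without it the statement would be false.
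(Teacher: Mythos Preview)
Your proof is correct and follows essentially the same route as the paper. The paper does not give a self-contained proof of the lemma but, immediately after stating it, carries out exactly your diagonalisation argument for the specific matrix $Q(t,\xi)$: order the eigenvalues $0\le\lambda_1\le\cdots\le\lambda_m\le c_0$, use $\lara{QV,V}\ge\lambda_1|V|^2$ and $\lambda_1=\det Q/(\lambda_2\cdots\lambda_m)\ge \det Q/c_0^{m-1}$; your write-up simply makes the same idea explicit for a general $N(t)$ and supplies the easy upper bound.
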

Since $Q(t,\xi)$ is a positive semi-definite matrix with eigenvalues $\lambda_i(t,\xi)$,
which satisfy symbol estimates of order $0$ in $\xi$ and we can assume them ordered,
i.e., $\lambda_1\le\lambda_2\le...\le\lambda_m$, there exists a constant $c_0>0$ such that
\[
0\le\lambda_1(t,\xi)\le\lambda_2(t,\xi)\le\cdots\le\lambda_m(t,\xi)\le c_0
\]
holds for all $t\in[a,b]$ and $\xi\in\R^n$. It follows that when $\det Q(t,\xi)>0$ we can write
\[
\lara{QV,V}\ge \lambda_1|V|^2=\frac{\det Q(t,\xi)}{\lambda_2(t,\xi)\cdots\lambda_m(t,\xi)}|V|^2\ge \frac{\det Q(t,\xi)}{c_0^{m-1}}|V|^2.
\]
It follows that Lemma \ref{lemmaJT} holds for the matrix $Q(t,\xi)$. More precisely,
\begin{lem}
\label{lemmaGR}
Let $Q(t,\xi)$ be the symmetriser of the weakly hyperbolic matrix $A(t,\xi)$ defined above. Then, there exist two positive constants $c_1$ and $c_2$ such that
\[
c_1\det Q(t,\xi)|V|^2\le \lara{Q(t,\xi)V,V}\le c_2|V|^2
\]
holds for all $t\in[a,b]$, $\xi\in\R^n$ and $V\in\C^m$.
\end{lem}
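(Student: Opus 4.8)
The plan is to derive the lemma from the abstract Lemma~\ref{lemmaJT} by applying it with the frequency $\xi$ frozen, after checking that its hypotheses hold with constants that can be chosen independently of $\xi$. Concretely, I would verify two things: that for each $t\in[a,b]$ and $\xi\in\R^n$ the matrix $Q(t,\xi)$ is real symmetric and positive semi-definite, and that the entries $q_{ij}(t,\xi)$ are bounded by a constant independent of $t\in[a,b]$ and of $\xi$. Once both are established, Lemma~\ref{lemmaJT} applied to $N(t)=Q(t,\xi)$ yields the two estimates with the same $c_1,c_2$ for every $\xi$, since the constants in that lemma depend only on the $L^\infty$-bound of the entries.

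The positive semi-definiteness is immediate from Jannelli's construction together with weak hyperbolicity: for fixed $(t,\xi)$ the matrix $A(t,\xi)$ is the companion matrix of a monic polynomial whose roots are the rescaled characteristic roots $\tau_1(t,\xi)\lara{\xi}^{-1},\dots,\tau_m(t,\xi)\lara{\xi}^{-1}$, and these are real because $M$ is weakly hyperbolic (also at $\xi=0$, where the polynomial is $\tau^m$), so the standard symmetriser $Q(t,\xi)$ is real symmetric and weakly positive definite, with $QA-A^\ast Q=0$ and $\det Q(t,\xi)\ge 0$ by \eqref{detq}. The substantive point, and essentially the only one requiring an argument, is the uniform bound on the entries. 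Here I would use that the $q_{ij}$ are fixed polynomials in the coefficients of the characteristic polynomial of $A(t,\xi)$ — equivalently, symmetric polynomials in its eigenvalues, or polynomials in the $h_j$ via the Bezout matrix — so it suffices to bound those coefficients $a_j(t,\xi)$. Since $a_j(t,\xi)=\sum_{|\nu|=m-j+1}a_{\nu,m-j+1}(t)\,\xi^\nu\lara{\xi}^{j-m-1}$ and $|\xi^\nu\lara{\xi}^{j-m-1}|\le|\xi|^{m-j+1}\lara{\xi}^{-(m-j+1)}\le 1$, continuity of the principal-part coefficients on the compact interval $[a,b]$ gives $|a_j(t,\xi)|\le C$ for all $t\in[a,b]$ and $\xi\in\R^n$, hence $|q_{ij}(t,\xi)|\le c_2'$ uniformly.

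With these facts in hand the two inequalities follow by elementary linear algebra, exactly as sketched in the paragraph preceding the statement. Writing $0\le\lambda_1(t,\xi)\le\dots\le\lambda_m(t,\xi)$ for the eigenvalues of $Q(t,\xi)$, the uniform bound on the entries forces $\lambda_m\le c_0$ for some $c_0>0$, which gives the upper estimate $\lara{Q(t,\xi)V,V}\le\lambda_m|V|^2\le c_0|V|^2$. For the lower estimate, when $\det Q(t,\xi)>0$ all eigenvalues are positive and $\lambda_1=\det Q(t,\xi)/(\lambda_2\cdots\lambda_m)\ge\det Q(t,\xi)/c_0^{m-1}$, so $\lara{Q(t,\xi)V,V}\ge\lambda_1|V|^2\ge c_0^{-(m-1)}\det Q(t,\xi)|V|^2$; when $\det Q(t,\xi)=0$ the inequality is trivial. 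Thus one may take $c_1=c_0^{-(m-1)}$ and $c_2=c_0$. I expect no genuine obstacle beyond the uniform-in-$\xi$ control of the entries of $Q$, which is precisely what makes the constants $c_1,c_2$ independent of the frequency.
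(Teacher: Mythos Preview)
Your proposal is correct and follows essentially the same route as the paper: the argument in the paper is precisely the eigenvalue computation you reproduce, namely that the entries of $Q$ (being polynomials in the order-zero symbols $a_j$) are uniformly bounded so that $\lambda_m\le c_0$, whence the upper bound is immediate and the lower bound comes from $\lambda_1=\det Q/(\lambda_2\cdots\lambda_m)\ge c_0^{-(m-1)}\det Q$ when $\det Q>0$. You have supplied a bit more detail on the uniform-in-$\xi$ boundedness of the $a_j$ than the paper does, but the strategy and the key steps coincide.
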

Let $I$ be a closed interval in $\R$. We recall (see also \cite[p.~1003-1004]{JT})
that if $B(t)$ and $C(t)$ are two real symmetric $m\times m$ matrices, $C(t)$ is nonnegative and $\det C(t)$ has only isolated zeros then
\beq
\label{BC}
\frac{\lara{B(t)V,V}}{\lara{C(t)V,V}}\in L^\infty(I\times (\C^m\setminus 0))
\eeq
if and only if the roots $\lambda_i$ of the generalised Hamilton-Cayley polynomial
\[
\det(\lambda C(t)-B(t))=\sum_{h=0}^m d_h(t)\lambda^{m-h}
\]
of $B(t)$ and $C(t)$, are bounded functions of $t$. Since
\[
\sum_{i=1}^m\lambda_i^2(t)=\frac{d_1^2(t)}{d_0^2(t)}-2\frac{d_2(t)}{d_0(t)},
\]
we conclude that \eqref{BC} holds if and only if
\begin{equation}\label{EQ:ds}
\frac{d_1^2(t)}{d_0^2(t)}-2\frac{d_2(t)}{d_0(t)}
\end{equation}
is bounded. We are now ready to prove the following lemma.
\begin{lem}
\label{lemmaEGR}
Let $Q(t,\xi)$ be the symmetriser of the matrix $A(t,\xi)$. Let $\Delta(t,\xi)=\det Q(t,\xi)$, $\wt{\Delta}(t,\xi)=\Delta(t,\xi)+(\partial_t \Delta(t,\xi))^2/\Delta(t,\xi)$, $\psi(t,\xi)$ the check function of $Q(t,\xi)$. Let $I$ be a closed interval of $\R$.
Then,
\beq
\label{E1}
\sqrt{\frac{\Delta(t,\xi)}{\wt{\Delta}(t,\xi)}}\frac{\lara{\partial_t Q(t,\xi)V,V}}{\lara{Q(t,\xi)V,V}}\in L^\infty (I\times \R^n\times\C^m\setminus  0)
\eeq
if and only if
\beq
\label{C1}
\frac{\psi(t,\xi)}{\wt{\Delta}(t,\xi)}\in L^\infty (I\times \R^n).
\eeq
\end{lem}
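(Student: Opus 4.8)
The plan is to reduce \eqref{E1}/\eqref{C1} to the criterion \eqref{BC}--\eqref{EQ:ds} applied with the pair of matrices $B(t,\xi)=\partial_t Q(t,\xi)$ and $C(t,\xi)=Q(t,\xi)$, treating $\xi$ as a fixed parameter while keeping all constants uniform in $\xi$. First I would record that, by the remarks at the end of Section \ref{S:1}, for each fixed $\xi$ with $\Delta(\cdot,\xi)\not\equiv 0$ the function $t\mapsto\Delta(t,\xi)=\det Q(t,\xi)$ is analytic and not identically zero, hence has only isolated zeros, so the equivalence \eqref{BC}$\Leftrightarrow$boundedness of \eqref{EQ:ds} is available. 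Writing $d_0=\det Q=\Delta$, $d_1=-\partial_t\Delta$ and $d_2=\psi$ (by the very definition of the check function), the quantity \eqref{EQ:ds} becomes
\[
\frac{(\partial_t\Delta(t,\xi))^2}{\Delta(t,\xi)^2}-2\frac{\psi(t,\xi)}{\Delta(t,\xi)}.
\]
So the boundedness of \eqref{EQ:ds} is, after multiplying through, equivalent to the boundedness of
\[
\frac{(\partial_t\Delta)^2-2\psi\,\Delta}{\Delta^2}.
\]

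The next step is the algebraic identity linking this to the ratio in \eqref{C1}. By definition $\wt\Delta=\Delta+(\partial_t\Delta)^2/\Delta$, so $\wt\Delta/\Delta=1+(\partial_t\Delta)^2/\Delta^2$, which is $\ge 1$; dividing the displayed ratio by $\wt\Delta/\Delta$ we see that $\bigl((\partial_t\Delta)^2-2\psi\Delta\bigr)/\Delta^2$ is bounded if and only if $\bigl((\partial_t\Delta)^2-2\psi\Delta\bigr)/(\Delta\wt\Delta)$ is bounded. Since $(\partial_t\Delta)^2/(\Delta\wt\Delta)\le 1$ automatically, the latter is bounded precisely when $\psi\Delta/(\Delta\wt\Delta)=\psi/\wt\Delta$ is bounded, i.e.\ exactly \eqref{C1}. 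Thus the chain \eqref{EQ:ds} bounded $\Leftrightarrow$ \eqref{C1} is purely elementary once one is careful about signs and about the normalising factor $\wt\Delta/\Delta\ge 1$.

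It remains to connect \eqref{BC} (with $B=\partial_t Q$, $C=Q$) to the left-hand side \eqref{E1}. Here I would argue that
\[
\sqrt{\tfrac{\Delta}{\wt\Delta}}\,\frac{\lara{\partial_t Q\,V,V}}{\lara{Q\,V,V}}
\]
is bounded iff $\lara{\partial_t Q\,V,V}/\lara{Q\,V,V}$ is bounded times $\sqrt{\wt\Delta/\Delta}$; but one should \emph{not} expect $\lara{\partial_t Q V,V}/\lara{QV,V}$ by itself to be bounded (that would correspond to the strictly hyperbolic case), so the role of the weight $\sqrt{\Delta/\wt\Delta}$ is exactly to compensate the blow-up of $\lara{\partial_t QV,V}/\lara{QV,V}$ near the zeros of $\Delta$. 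The clean way is: \eqref{E1} holds $\Leftrightarrow$ $\bigl(\tfrac{\Delta}{\wt\Delta}\bigr)\bigl(\tfrac{\lara{\partial_t QV,V}}{\lara{QV,V}}\bigr)^2$ is bounded, and by \eqref{BC} the \emph{square} $\bigl(\lara{\partial_t QV,V}/\lara{QV,V}\bigr)^2$ has the same boundedness behaviour (uniformly in $V$) as $\sum_i\lambda_i(t,\xi)^2 = d_1^2/d_0^2-2d_2/d_0$, which is \eqref{EQ:ds}; multiplying by $\Delta/\wt\Delta$ converts \eqref{EQ:ds}-type boundedness into \eqref{C1}-type boundedness by the computation of the previous paragraph. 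The uniformity in $\xi$ comes from Lemma \ref{lemmaGR}, which gives two-sided bounds on $\lara{QV,V}$ with constants independent of $\xi$, together with the boundedness of the entries of $\partial_t Q$ (analytic, $0$-homogeneous in $\xi$, hence uniformly bounded for $t\in[a,b]$).

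The main obstacle I anticipate is the uniformity in $\xi$ in the use of \eqref{BC}: the criterion \eqref{BC}--\eqref{EQ:ds} as stated is for a single pair of matrices on an interval $I$, whereas here we have a $\xi$-parametrised family and we need the $L^\infty$ bound on $I\times\R^n\times(\C^m\setminus 0)$. I would handle this by applying \eqref{BC} pointwise in $\xi$ to get the pointwise equivalence of the two sides, and then separately tracking constants: on the $\lara{QV,V}$ side the uniformity is Lemma \ref{lemmaGR}; on the $d_j$ side it follows because $d_0=\Delta$, $d_1=-\partial_t\Delta$, $d_2=\psi$ are all $0$-homogeneous in $\xi$ and analytic in $t$, so their sup-norms over $t\in[a,b]$ are finite and independent of $\xi$, and the equivalence ``$\sum\lambda_i^2$ bounded $\Leftrightarrow$ $\lara{\partial_t QV,V}/\lara{QV,V}$ bounded'' holds with constants depending only on these sup-norms (this is the content of \eqref{BC}, whose proof — see the reference to \cite[pp.~1003--1004]{JT} — gives effective constants). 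A minor technical point to be careful about is that $\Delta(t,\xi)$ may vanish at isolated points of $[a,b]$, so the ratios in \eqref{E1} and \eqref{C1} are a priori defined only off a finite set; but since everything in sight is continuous (indeed analytic) in $t$ away from these zeros and the statement is about $L^\infty$, this causes no difficulty.
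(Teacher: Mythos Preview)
Your overall approach is correct and rests on the same core idea as the paper: reduce to the criterion \eqref{BC}/\eqref{EQ:ds} and observe that the $d_1$-contribution is automatically controlled, so that boundedness of \eqref{EQ:ds} (after weighting) is exactly \eqref{C1}. The execution differs in one clean stroke: the paper absorbs the weight into $B$ from the outset, applying \eqref{BC} to the pair $B=\sqrt{\Delta/\wt\Delta}\,\partial_t Q$, $C=Q$. A direct computation of $\det(\lambda Q-\sqrt{\Delta/\wt\Delta}\,\partial_t Q)$ then yields $d_0=\Delta$, $d_1=-\partial_t\Delta\,\sqrt{\Delta/\wt\Delta}$, $d_2=\psi\,\Delta/\wt\Delta$, so that $(d_1/d_0)^2=(\partial_t\Delta)^2/(\Delta\wt\Delta)\le 1$ and $d_2/d_0=\psi/\wt\Delta$, and \eqref{E1}$\Leftrightarrow$\eqref{C1} drops out of \eqref{EQ:ds} immediately, with no further manipulation. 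Your route instead applies \eqref{BC} to $(\partial_t Q,Q)$ and then multiplies by $\Delta/\wt\Delta$; this works, but it requires a \emph{pointwise} (in $t,\xi$) comparison between $\sup_{V\neq 0}\bigl|\lara{\partial_t QV,V}/\lara{QV,V}\bigr|^2$ and $\sum_i\lambda_i^2$, which is strictly stronger than \eqref{BC} as stated (the latter is only an $L^\infty$-over-$I$ equivalence). The pointwise comparison is of course true --- it is the generalised Rayleigh quotient identity $\sup_V|\cdot|=\max_i|\lambda_i|$ together with $\max_i\lambda_i^2\le\sum_i\lambda_i^2\le m\max_i\lambda_i^2$ --- but you should invoke that rather than \eqref{BC}.

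Two minor corrections. The ``if and only if'' in your second paragraph is false: dividing by $\wt\Delta/\Delta\ge 1$ gives only one implication, and in fact $\bigl((\partial_t\Delta)^2-2\psi\Delta\bigr)/\Delta^2$ is typically \emph{not} bounded near zeros of $\Delta$ (this is precisely the weakly hyperbolic regime). Fortunately your final argument in the third paragraph does not use this step, proceeding instead through $(\Delta/\wt\Delta)\sum_i\lambda_i^2=(\partial_t\Delta)^2/(\Delta\wt\Delta)-2\psi/\wt\Delta$, which is correct. Also, in your last paragraph, the entries of $Q$ (hence $\Delta$, $\partial_t\Delta$, $\psi$) are $0$-order symbols in $\xi$ built from $\tau_j\lara{\xi}^{-1}$, not $0$-homogeneous; the uniform boundedness you need still holds, but the wording should be adjusted.
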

\begin{proof}
Since
\[
\det \biggl(\lambda Q-\partial_t Q\sqrt{\frac{\Delta}{\wt{\Delta}}}\biggr)=\sqrt{\frac{\Delta}{\wt{\Delta}}}^{\,m}\det\biggl
(\lambda\sqrt{\frac{\wt{\Delta}}{\Delta}} Q-\partial_t Q\biggr),
\]
by \eqref{HamCay} and the definition of $d_0$, $d_1$ and $d_2$ we get
\[
\det \biggl(\lambda Q-\partial_t Q\sqrt{\frac{\Delta}{\wt{\Delta}}}\biggr)=\Delta\lambda^m-\partial_t\Delta\sqrt{\frac{\Delta}{\wt{\Delta}}}\lambda^{m-1}
+\psi\frac{\Delta}{\wt{\Delta}}\lambda^{m-2}+\sum_{h=3}^m d_h\biggl(\frac{\Delta}{\wt{\Delta}}\biggr)^{\frac{h}{2}}
\lambda^{m-h}.
\]
Applying \eqref{BC} to $B=\partial_t Q\sqrt{\frac{\Delta}{\wt{\Delta}}}$ and $C=Q$ we obtain that \eqref{E1} holds if and only if
\[
\frac{\partial_t\Delta\sqrt{\frac{\Delta}{\wt{\Delta}}}}{\Delta}
\]
and
\[
\frac{\psi\frac{\Delta}{\wt{\Delta}}}{\Delta}
\]
are bounded functions on $I\times \R^n$ and there is no cancellation of unbounded terms in
\eqref{EQ:ds}. Since
\[
\biggl|\frac{\partial_t\Delta\sqrt{\frac{\Delta}{\wt{\Delta}}}}{\Delta}\biggr|\le 1
\]
we have that \eqref{E1} is equivalent to \eqref{C1}.
\end{proof}

\subsection{A technical lemma about real analytic functions on a real interval}
Recalling the relationship between $\Delta$ and $\Delta_0$ in \eqref{detq} we can apply Proposition 4.1 in \cite{JT} to the determinant $\Delta(t,\xi)$ of the symmetriser $Q(t,\xi)$. This yields the following statement.
\begin{proposition}
\label{propJT}
 Let $\Delta(t,\xi)$ be as above. Suppose that $\Delta(t,\xi)\not\equiv 0$. Then,
 \begin{itemize}
 \item[(i)] there exists $X\subset\mathbb{S}^{n-1}$ such that $\Delta(t,\xi)\not\equiv 0$ in $(\delta, T+\delta)$ for any ${\xi}\in X$ and the set $\mathbb{S}^{n-1}\setminus X$ is negligible with respect to the Hausdorff $(n-1)$-measure;
 \item[(ii)] for any $[a,b]\subset(\delta, T+\delta)$ there exist $c_1,c_2>0$ and $p,q\in\N$ such that for any $\xi\in X$ and any $\eps\in(0,\esp^{-1}]$ there exists $A_{\xi,\eps}\subset[a,b]$ such that:
\begin{itemize}
\item[-] $A_{\xi,\eps}$ is a union of at most $p$ disjoint intervals,
\item[-] $m(A_{\xi,\eps})\le\eps$,
\item[-] $\min_{t\in[a,b]\setminus A_{\xi,\eps}} \Delta(t,\xi)\ge c_1\eps^{2q}\Vert\Delta(\cdot,\xi)\Vert_{L^\infty([a,b])}$,
\item[-]
\[
\int_{t\in[a,b]\setminus A_{\xi,\eps}}\frac{|\partial_t\Delta(t,\xi)|}{\Delta(t,\xi)}\, dt\le c_2\log\frac{1}{\eps}.
\]
\end{itemize}

\end{itemize}
\end{proposition}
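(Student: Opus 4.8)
The plan is to reduce the statement entirely to Proposition 4.1 of \cite{JT}, which is the corresponding statement for the determinant $\Delta_0(t,\xi)$ of the $0$-homogeneous symmetriser $Q_0$, and then transfer the conclusions across the identity \eqref{detq}. The crucial observation to extract from \eqref{detq} is the following: writing $\rho(\xi):=\lara{\xi}^{-m(m-1)}|\xi|^{m(m-1)}$ we have $\Delta(t,\xi)=\rho(\xi)\,\Delta_0(t,\xi)$ with $\rho(\xi)>0$ for $\xi\neq 0$, and, most importantly, $\rho$ is \emph{independent of $t$}; moreover on the unit sphere $\rho(\xi)=2^{-m(m-1)/2}$ is a fixed positive constant for every $\xi\in\mathbb{S}^{n-1}$. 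Since the entries of $Q_0$, hence $\Delta_0$, are polynomials in the analytic coefficients $h_j$, the function $\Delta_0$ (and therefore $\Delta$) is real analytic in $t$, and $\Delta(\cdot,\xi)\not\equiv 0$ on $(\delta,T+\delta)$ if and only if $\Delta_0(\cdot,\xi)\not\equiv 0$ there; in particular the global hypothesis $\Delta\not\equiv 0$ is equivalent to $\Delta_0\not\equiv 0$.

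Next I would apply Proposition 4.1 of \cite{JT} to $\Delta_0$. This produces a set $X\subset\mathbb{S}^{n-1}$ with $\mathbb{S}^{n-1}\setminus X$ negligible for the Hausdorff $(n-1)$-measure and $\Delta_0(\cdot,\xi)\not\equiv 0$ for $\xi\in X$, together with, for a fixed $[a,b]\subset(\delta,T+\delta)$, constants $c_1,c_2>0$, integers $p,q\in\N$, and sets $A_{\xi,\eps}\subset[a,b]$ (each a union of at most $p$ disjoint intervals, of measure $\le\eps$) such that
\[
\min_{t\in[a,b]\setminus A_{\xi,\eps}}\Delta_0(t,\xi)\ge c_1\,\eps^{2q}\,\|\Delta_0(\cdot,\xi)\|_{L^\infty([a,b])},
\qquad
\int_{[a,b]\setminus A_{\xi,\eps}}\frac{|\partial_t\Delta_0(t,\xi)|}{\Delta_0(t,\xi)}\,dt\le c_2\log\frac1\eps .
\]
By the previous paragraph, this $X$ already verifies part (i) for $\Delta$. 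For part (ii) I keep the same $X$, the same $p,q$, and the same sets $A_{\xi,\eps}$. Multiplying the first inequality by $\rho(\xi)=2^{-m(m-1)/2}$ and using $\Delta=\rho(\xi)\Delta_0$ together with $\|\Delta(\cdot,\xi)\|_{L^\infty([a,b])}=\rho(\xi)\|\Delta_0(\cdot,\xi)\|_{L^\infty([a,b])}$, the factor $\rho(\xi)$ cancels, so the min estimate holds for $\Delta$ with the \emph{same} $c_1$ and $q$. For the logarithmic derivative, $|\partial_t\Delta(t,\xi)|/\Delta(t,\xi)=|\partial_t\Delta_0(t,\xi)|/\Delta_0(t,\xi)$ pointwise in $t$ (again $\rho(\xi)$, being $t$-independent, drops out), so the integral bound transfers verbatim with the same $c_2$. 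This establishes (ii) and completes the proof.

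I do not expect a genuine obstacle here: every quantity appearing in Proposition 4.1 of \cite{JT} — the ratio $\min\Delta/\|\Delta\|_{L^\infty}$ and the logarithmic derivative $|\partial_t\Delta|/\Delta$ — is invariant under multiplication of $\Delta$ by a positive factor that does not depend on $t$, and on $\mathbb{S}^{n-1}$ that factor is even a fixed constant. The only point that has to be made explicitly, and which is the whole content of the reduction, is that the factor relating $\Delta$ to $\Delta_0$ in \eqref{detq} is $t$-independent; all the substantive analytic work (the structure of the zero set of an analytic function on a real interval, the choice of the exceptional sets $A_{\xi,\eps}$, and the dependence of the constants on $\xi$ through the sphere) has already been carried out in \cite{JT}.
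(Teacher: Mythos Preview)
Your proposal is correct and follows exactly the route the paper indicates: the paper's proof consists of the single sentence ``Recalling the relationship between $\Delta$ and $\Delta_0$ in \eqref{detq} we can apply Proposition 4.1 in \cite{JT} to the determinant $\Delta(t,\xi)$ of the symmetriser $Q(t,\xi)$,'' and you have spelled out precisely why that reduction works (the factor $\rho(\xi)$ is $t$-independent, so both the normalised minimum and the logarithmic derivative transfer from $\Delta_0$ to $\Delta$ unchanged).
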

Proposition \ref{propJT} provides a partition of the interval $[a,b]$ which is crucial in the proof of the $\Cinf$ well-posedness in the next subsection.

\subsection{$\Cinf$ well-posedness of the Cauchy problem \eqref{CP}}
To prove the $\Cinf$ well-posedness of the Cauchy problem \eqref{CP} in the form \eqref{eqV} we make use of the energy
\[
E(t,\xi)=\begin{cases}
|V(t,\xi)|^2 & \text{for $t\in A_{\xi/|\xi|,\eps}$ and $\xi/|\xi|\in X$},\\
\lara{Q(t,\xi)V(t,\xi),V(t,\xi)} & \text{for $t\in[a,b]\setminus A_{\xi/|\xi|,\eps}$ and $\xi/|\xi|\in X$,}
\end{cases}
\]
defined for $t\in[a,b]$, $\xi\in\R^n$ with $\xi/|\xi|\in X$, and $\eps\in(0,\esp^{-1}]$.
Note that $\Delta(t,\xi)>0$ when $t\in[a,b]\setminus A_{\xi/|\xi|,\eps}$ and $\xi/|\xi|\in X$,
and $[a,b]\setminus A_{\xi/|\xi|,\eps}$ is a finite union of at most $p$ closed intervals $[c_i,d_i]$.
The set $A_{\xi/|\xi|,\eps}$ is a finite union of open intervals whose total length does not
exceed $\eps$. We now distinguish between Kovalevskian energy and hyperbolic energy.

\subsubsection{The Kovalevskian energy}
Let $t\in[t',t^{''}]\subseteq A_{\xi/|\xi|,\eps}$ and $\xi/|\xi|\in X$. Hence
\[
\begin{split}
\partial_t E(t,\xi)&= 2\Re\lara{V(t,\xi),\partial_t V(t,\xi)}\\
& = 2\Re\lara{V(t,\xi),i\lara{\xi}A(t,\xi)V(t,\xi)+iB(t,\xi)V(t,\xi)}\le 2(c_A\lara{\xi}+c_B)E(t,\xi).
\end{split}
\]
By Gronwall's Lemma on $[t',t^{''}]$ we get
\beq
\label{estEk}
|V(t,\xi)|\le \esp^{(c_A\lara{\xi}+c_B)(t-t')}|V(t',\xi)|\le c\,\esp^{c\lara{\xi}(t-t')}|V(t',\xi)|.
\eeq

\subsubsection{The hyperbolic energy}
Let us work on any subinterval $[c_i,d_i]$ of $[a,b]\setminus A_{\xi/|\xi|,\eps}$. Assuming $\xi/|\xi|\in X$,
we have that $\Delta(t,\xi)>0$ on $[c_i,d_i]$. Under the hypothesis \eqref{JT1}
using \eqref{QA}, Lemma \ref{lemmaEGR} and Lemma \ref{lemmaGR}, we have that
\[
\begin{split}
&\partial_t E(t,\xi)=\lara{\partial_t Q(t,\xi)V(t,\xi),V(t,\xi)}\\
&+\lara{Q(t,\xi)\partial_tV(t,\xi),V(t,\xi)}+\lara{Q(t,\xi)V(t,\xi),\partial_t V(t,\xi)}\\
&=\frac{\lara{\partial_t Q(t,\xi)V(t,\xi),V(t,\xi)}}{\lara{Q(t,\xi)V,V}}E(t,\xi)+\lara{Q(t,\xi)(i\lara{\xi}A(t,\xi)+iB(t,\xi))V(t,\xi),V(t,\xi)}\\
&+\lara{Q(t,\xi)V(t,\xi),(i\lara{\xi}A(t,\xi)+iB(t,\xi))V(t,\xi)}\\
&=\frac{\lara{\partial_t Q(t,\xi)V(t,\xi),V(t,\xi)}}{\lara{Q(t,\xi)V,V}}E(t,\xi)+i\lara{(Q(t,\xi)B(t,\xi)-B^\ast(t,\xi)Q(t,\xi))V(t,\xi),V(t,\xi)}\\
&\le C\sqrt{\frac{\wt{\Delta}(t,\xi)}{\Delta(t,\xi)}}E(t,\xi)+
|\lara{(Q(t,\xi)B(t,\xi)-B^\ast(t,\xi)Q(t,\xi))V(t,\xi),V(t,\xi)}|.\\
\end{split}
\]
At this point it is clear that if we have Levi conditions on the matrix $B$ such that
\beq
\label{LCB}
\frac{|\lara{(Q(t,\xi)B(t,\xi)-B^\ast(t,\xi)Q(t,\xi))V(t,\xi),V(t,\xi)}|}{\lara{Q(t,\xi)V,V}}\in L^\infty([a,b]\times \R^n\times\C^m\setminus 0),
\eeq
then there exists a constant $c>0$ such that
\beq
\label{Elot}
\partial_t E(t,\xi)\le c\biggl(1+\frac{|\partial_t\Delta(t,\xi)|}{\Delta(t,\xi)}\biggr)E(t,\xi)
\eeq
holds for $t\in[c_i,d_i]$ and $\xi\in\R^n$ with $\xi/|\xi|\in X$.
Before proving that this energy estimate yields the $\Cinf$ well-posedness
of the Cauchy problem \eqref{CP} let us show that the Levi conditions
\eqref{EQ:Leco} for the matrix $B$ guarantee \eqref{LCB}.
\begin{proposition}
\label{propLC}
Let $b_i$ and $q_{ij}$, $i,j=1,...,m$, be the entries of the matrix $B$ and $Q$ defined above. If there exists a constant $c>0$ such that
\beq
\label{LCb}
|(q_{im}b_j-\overline{b_i}q_{jm})(t,\xi)|\le c\Delta(t,\xi)
\eeq
holds for all $1\le i,j\le m$, $t\in[a,b]$ and $\xi\in\R^n$, then \eqref{LCB} holds.
\end{proposition}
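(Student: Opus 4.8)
The plan is to exploit the special structure of $B(t,\xi)$ --- only its last row is nonzero --- in order to reduce the boundedness in \eqref{LCB} to the scalar pointwise estimates on $q_{im}b_j-\overline{b_i}q_{jm}$ supplied by \eqref{LCb}, and then to absorb the remaining $V$-dependence by means of the two-sided bound of Lemma \ref{lemmaGR}.

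First I would observe that for any $V\in\C^m$ one has $B(t,\xi)V=\beta\,e_m$, where $e_m$ is the $m$-th standard basis vector and $\beta=\beta(t,\xi,V):=\sum_{l=1}^{m}b_l(t,\xi)V_l$. Since $Q(t,\xi)$ is real symmetric with entries $q_{ij}$, it follows that $(Q(t,\xi)B(t,\xi)V)_i=\beta\,q_{im}$, while $\langle B^{\ast}QV,V\rangle=\langle QV,BV\rangle=\overline{\beta}\,(QV)_m=\overline{\beta}\sum_{j}q_{jm}V_j$. Subtracting, a short computation produces the exact identity
\[
\langle (QB-B^{\ast}Q)(t,\xi)V,V\rangle=\sum_{i,j=1}^{m}\bigl(q_{im}b_j-\overline{b_i}\,q_{jm}\bigr)(t,\xi)\,V_j\,\overline{V_i}.
\]

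Next, applying \eqref{LCb} to each of the $m^{2}$ coefficients and using $\bigl(\sum_i|V_i|\bigr)^2\le m\,|V|^2$, I would estimate
\[
|\langle (QB-B^{\ast}Q)(t,\xi)V,V\rangle|\le c\,\Delta(t,\xi)\Bigl(\sum_{i=1}^{m}|V_i|\Bigr)^{2}\le c\,m\,\Delta(t,\xi)\,|V|^{2}.
\]
Dividing by $\langle Q(t,\xi)V,V\rangle$ and invoking $\langle Q(t,\xi)V,V\rangle\ge c_1\Delta(t,\xi)|V|^{2}$ from Lemma \ref{lemmaGR}, the common factor $\Delta(t,\xi)|V|^{2}$ cancels and one obtains the uniform bound $cm/c_1$ at every $(t,\xi)$ with $\Delta(t,\xi)>0$ and every $V\neq0$. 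At the remaining points, where $\Delta(t,\xi)=0$, the right-hand side of the displayed estimate is zero, so the numerator $\langle(QB-B^{\ast}Q)V,V\rangle$ vanishes identically in $V$; hence \eqref{LCB} follows (and, in any case, the set where $\Delta(\cdot,\xi)=0$ is finite by analyticity and $\Delta(\cdot,\xi)\not\equiv0$, hence negligible).

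The only step demanding genuine care --- and the one I regard as the \emph{main (mild) obstacle} --- is the first one: keeping track of the complex conjugates so that the cross terms assemble precisely into $q_{im}b_j-\overline{b_i}q_{jm}$ rather than into something symmetric in $i,j$. This is delicate because the lower order coefficients $b_i$, and hence the matrix $B$, are allowed to be complex, while $Q$ is real symmetric. Once that identity is in place, the rest of the argument is routine arithmetic combined with Lemma \ref{lemmaGR}.
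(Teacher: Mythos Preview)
Your proof is correct and follows essentially the same route as the paper's: both derive the identity $\langle (QB-B^{\ast}Q)V,V\rangle=\sum_{i,j}(q_{im}b_j-\overline{b_i}q_{jm})V_j\overline{V_i}$, apply \eqref{LCb} termwise to obtain a bound of the form $c\,\Delta(t,\xi)|V|^2$, and then invoke Lemma \ref{lemmaGR}. The only cosmetic differences are that the paper states the entry formula for $QB-B^{\ast}Q$ directly (rather than via the computation $BV=\beta e_m$), and uses $|V_i\overline{V_j}|\le|V_i|^2+|V_j|^2$ in place of your $(\sum_i|V_i|)^2\le m|V|^2$; your added remark on the zero set of $\Delta$ is not in the paper but is harmless.
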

\begin{proof}
We begin by observing that the matrix $QB-B^\ast Q$ has entries $d_{ij}=q_{im}b_j-\overline{b_i}q_{jm}$. It follows that 
\[
\lara{(QB-B^\ast Q)V,V}=\sum_{i=1}^m \sum_{j=1}^m(q_{im}b_j-\overline{b_i}q_{jm})V_j\overline{V_i}
\]
Hence,
\[
|\lara{(QB-B^\ast Q)V,V}|\le \sum_{i,j=1,...,m}|q_{im}b_j-\overline{b_i}q_{jm}|(|V_i|^2+|V_j|^2)
\]
and by the hypothesis \eqref{LCb} we get
\[
|\lara{(QB-B^\ast Q)V,V}(t,\xi)|\le c \Delta(t,\xi) |V(t,\xi)|^2,
\]
for some constant $c>0$, uniformly in $t\in[a,b]$ and $\xi\in\R^n$. Finally, combining this estimate with the bound from below of Lemma \ref{lemmaGR} we obtain \eqref{LCB}.
\end{proof}
\begin{remark}
\label{rem_B}
Note that when the lower order terms are real-valued then the matrix $QB-B^\ast Q$ is skew-symmetric. This means that
$d_{ij}=q_{im}b_j-\overline{b_i}q_{jm}$ is identically zero when $i=j$ and $d_{ij}=-d_{ji}$. It follows that the Levi conditions \eqref{LCB} can be rewritten as
\beq
\label{LCbr}
|(q_{im}b_j-b_iq_{jm})(t,\xi)|\le c\Delta(t,\xi),
\eeq
for $1\le i<j\le m$, $t\in[a,b]$ and $\xi\in\R^n$.
\end{remark}

We are now ready to prove Theorem \ref{theoGRm}.

\begin{proof}[Proof of Theorem \ref{theoGRm}]
First of all, by the finite speed of propagation for hyperbolic equations we can always assume that the Cauchy data in \eqref{CP} are compactly supported. We refer to the Kovalevskian energy and the hyperbolic energy introduced above. We note that in the energies in consideration we can assume $|\xi|\ge 1$ since the continuity of $V(t,\xi)$ in $\xi$ implies that both energies are bounded for $|\xi|\le 1$. In particular, the Levi condition \eqref{LCb} for $|\xi|\ge 1$ yields the energy estimate \eqref{Elot} for $|\xi|\ge 1$. Hence, by Gronwall's Lemma on $[c_i,d_i]$ we get the inequality
\beq
\label{estEhyp}
E(t,\xi)\le \esp^{c(d_i-c_i)}\exp\biggl(c\int_{c_i}^t\frac{|\partial_s\Delta(s,\xi)|}{\Delta(s,\xi)}\,ds\biggr)E(c_i,\xi).
\eeq
Note that by Proposition \ref{propJT}, (ii), we have
\[
\Delta(t,\xi)\ge \min_{s\in [a,b]\setminus A_{\xi,\eps}}\Delta(s,\xi)\ge c_1\eps^{2q}\Vert \Delta(\cdot,\xi)\Vert_{L^\infty([a,b])},
\]
for all $t\in[c_i,d_i]$. Hence, applying Lemma \ref{lemmaGR} to \eqref{estEhyp}
we have that there exists a constant $C>0$ such that
\begin{multline}
\label{estEhyp2}
|V(t,\xi)|^2\le C\frac{1}{\eps^{2q}\Vert \Delta(\cdot,\xi)\Vert_{L^\infty([a,b])}}\exp\biggl(\int_{c_i}^t
\frac{|\partial_s\Delta(s,\xi)|}{\Delta(s,\xi)}\,ds\biggr)|V(c_i,\xi)|^2,\\
\le C\frac{1}{\eps^{2q}\Vert \Delta(\cdot,\xi)\Vert_{L^\infty([a,b])}}\esp^{C\log(1/\eps)}|V(c_i,\xi)|^2,
\end{multline}
for all $t\in[c_i,d_i]$ and for $|\xi|\ge 1$, where we have used
Proposition \ref{propJT}, (ii), in the last step.
Taking into account that we have at most $p$ closed intervals $[c_i,d_i]$,
by combining \eqref{estEk} with \eqref{estEhyp2} we conclude that
\[
|V(b,\xi)|\le C\frac{1}{\eps^{pq}\Vert \Delta(\cdot,\xi)\Vert^{p/2}_{L^\infty([a,b])}}\esp^{C(\log(1/\eps)+\eps|\xi|)}|V(a,\xi)|,
\]
for $|\xi|\ge 1$. At this point setting $\eps=\esp^{-1}\lara{\xi}^{-1}$ we have that there exist constants $C'>0$ and $\kappa\in\N$ such that
\beq
\label{last-est}
|V(b,\xi)|\le C'\lara{\xi}^{pq+\kappa}|V(a,\xi)|,
\eeq
for $|\xi|\ge 1$. This proves the $\Cinf$ well-posedness of the Cauchy problem \eqref{CP}. Similarly, \eqref{last-est} implies the well-posedness of \eqref{CP} in $\mathcal{D}'(\R^n)$.
\end{proof}
\begin{remark}
Note that we can write the matrix $B$ as
\[
B(t,\xi)=\sum_{l=0}^{m-1}B_{-l}(t,\xi),
\]
where $B_{-l}$ has entries
\[
b_{-l,j}(t,\xi)=\begin{cases}
\sum_{|\gamma|=m-j-l}a_{m-j+1,\gamma}(t)\xi^\gamma\lara{\xi}^{j-m}, & \text{ for $j\le m-l$}, \\
0, & \text{otherwise},
\end{cases}
\]
of order $-l$ for $j=1,...,m$. It follows that
\begin{multline*}
\lara{(QB-B^\ast Q)V,V}=\sum_{i=1}^m \sum_{j=1}^m(q_{im}b_j-\overline{b_i}q_{jm})V_j\overline{V_i}\\
=\sum_{l=0}^{m-1}\sum_{i,j=1,...,m}(q_{im}b_{-l,j}-{\overline{b}}_{-l,i}q_{jm})V_j\overline{V_i}.
\end{multline*}
We notice that it is enough to put Levi conditions on terms up to order $-(m-2)$ for the 
$\Cinf$ well-posedness. More precisely, 
if $2q\le m-1$ it is enough to put Levi conditions on terms up to order $-(2q-1)$ in order
to get the $\Cinf$ well-posedness. Indeed, if
\[
|(q_{im}b_{-l,j}-{\overline{b}}_{-l,i}q_{jm})(t,\xi)|\le c\Delta(t,\xi),\qquad t\in[a,b],\, |\xi|\ge 1
\]
for $1\le i,j\le m$ and $0\le l\le 2q-1$ then by the hypothesis \eqref{GR1m} and Proposition \ref{propJT} we get the hyperbolic energy \[
\begin{split}
&\partial_t E(t,\xi)\\
&=\frac{\lara{\partial_t Q(t,\xi)V(t,\xi),V(t,\xi)}}{\lara{Q(t,\xi)V,V}}E(t,\xi)+i\lara{(Q(t,\xi)B(t,\xi)-B^\ast(t,\xi)Q(t,\xi))V(t,\xi),V(t,\xi)}\\
&\le C\sqrt{\frac{\wt{\Delta}(t,\xi)}{\Delta(t,\xi)}}E(t,\xi)+
\sum_{l=0}^{2q-1}|\lara{(Q(t,\xi)B_{-l}(t,\xi)-B_{-l}^\ast(t,\xi)Q(t,\xi))V(t,\xi),V(t,\xi)}|\\
&+\sum_{l=2q}^{m-1}|\lara{(Q(t,\xi)B_{-l}(t,\xi)-B_{-l}^\ast(t,\xi)Q(t,\xi))V(t,\xi),V(t,\xi)}|\\
&\le c\biggl(1+\frac{|\partial_t\Delta(t,\xi)|}{\Delta(t,\xi)}\biggr)E(t,\xi)+c\lara{\xi}^{-2q}\eps^{-2q}E(t,\xi).
\end{split}
\]
Note that
\[
\sum_{l=0}^{2q-1}|\lara{(Q(t,\xi)B_{-l}(t,\xi)-B_{-l}^\ast(t,\xi)Q(t,\xi))V(t,\xi),V(t,\xi)}|
\]
is estimated by means of the Levi conditions, whereas the bound $\lara{\xi}^{-2q}\eps^{-2q}E(t,\xi)$ for
\[
\sum_{l=2q}^{m-1}|\lara{(Q(t,\xi)B_{-l}(t,\xi)-B_{-l}^\ast(t,\xi)Q(t,\xi))V(t,\xi),V(t,\xi)}|
\]
is obtained by symbol properties. At this point setting $\eps=\esp^{-1}\lara{\xi}^{-1}$ we can estimate $|V(t,\xi)|^2$ as in the proof of Theorem \ref{theoGRm}.
\end{remark}


\section{Examples and remarks on the condition \eqref{GR1m} and the Levi conditions \eqref{EQ:Leco}}
\label{S:4}

In this section we collect some examples and we have a closer look at the hypothesis
\eqref{GR1m} and the Levi conditions \eqref{LCB} (or \eqref{EQ:Leco}).

We begin with the hypothesis \eqref{GR1m}. For the sake of simplicity we will
assume that the roots have only one zero at $t=0$ (the proof can be adapted to the case
of a finite number of zeroes) and we will take the interval $[a,b]=[0,T]$.

Recall that in this case there exists positive constants $c_1,c_2$ such that
\beq
\label{Delta2}
c_1\frac{1}{t^2}\Delta(t,\xi)\le \wt{\Delta}(t,\xi)\le c_2\frac{1}{t^2}\Delta(t,\xi),
\eeq
for $t\in[0,T]$ and $|\xi|\ge 1$. 
Before we argue this, we note that we will write such bounds over intervals of the type $[0,T]$, meaning that
they hold over $(0,T]$ and extend uniformly over $t=0$ (usually due to cancellation of zeros).
Now, the estimate \eqref{Delta2} is trivial when $t$ is far from $0$, i.e. $t\in[\beta, T]\subseteq [0,T]$ with $\beta>0$, since both $\Delta(t,\xi)$ and $\wt{\Delta}(t,\xi)$ are different from 0 there. When we are on a sufficiently small interval $[0,\beta]$ using the analyticity in $t$ of $\Delta(t,\xi)$ we have
\beq
\label{Delta1}
t^2\frac{(\partial_t\Delta(t,\xi))^2}{\Delta^2(t,\xi)}\in L^\infty([0,\beta], |\xi|\ge 1)
\eeq
and
\beq
\label{Delta-two}
t^2\frac{\Delta^2(t,\xi)}{(\partial_t\Delta(t,\xi))^2}\in L^\infty([0,\beta], |\xi|\ge 1).
\eeq
To prove \eqref{Delta1} recall the relation \eqref{detq} and write $\Delta(t,\xi)$ as $t^k g(t,\xi)$, where $k=k(\xi)$ is positive and bounded and $|g(t,\xi)|\ge \gamma_0$ for all $t\in[0,T]$ and $|\xi|\ge 1$. Hence,
\[
t^2\frac{(\partial_t\Delta(t,\xi))^2}{\Delta^2(t,\xi)}= \frac{t^2 t^{2k-2}(kg(t,\xi)+t\partial_t g(t,\xi))^2}{t^{2k}g^2(t,\xi)}\le c
\]
for $t\in[0,\beta]$ and $|\xi|\ge 1$. Analogously, to prove \eqref{Delta-two} setting $\sup_{t\in[0,T], |\xi|\ge 1}|g(t,\xi)|=\gamma_1$ and $\sup_{t\in[0,T], |\xi|\ge 1}|\partial_tg(t,\xi)|=\gamma_2$ we get
\begin{multline*}
t^2\frac{\Delta^2(t,\xi)}{(\partial_t\Delta(t,\xi))^2}=\frac{t^2t^{2k}g^2(t,\xi)}{t^{2k-2}(kg(t,\xi)+t\partial_t g(t,\xi))^2}\\
\le \frac{t^{2k+2}}{t^{2k-2}} \frac{g^2(t,\xi)}{(k|g(t,\xi)|-t|\partial_t g(t,\xi)|)^2}\le t^4 \frac{\gamma_1^2}{(k\gamma_0-t\gamma_2)^2},
\end{multline*}
which is bounded on $[0,\beta]\times\{\xi: |\xi|\ge 1\}$ choosing $\beta$ small enough. Thus \eqref{Delta2} is valid for $t\in[0,\beta]$ and $|\xi|\ge 1$.

We are now ready to state the following proposition.
\begin{proposition}
\label{prop_psi}
Let $m=2$ and let
$\lambda_1(t,\xi)=\tau_1(t,\xi)\lara{\xi}^{-1}, \lambda_2(t,\xi)=\tau_2(t,\xi)\lara{\xi}^{-1}$
be the renormalised roots of the characteristic polynomial of \eqref{CP}.
Assume that $\lambda_1(t,\xi)$ and $\lambda_2(t,\xi)$ which are analytic in
$t$ coincide at $t=0$ only, with $\lambda_1(0,\xi)=\lambda_2(0,\xi)=0$.
Then, the hypothesis \eqref{GR1m} is equivalent to each of the following two
conditions, for $|\xi|\ge 1$ and $T$ small enough:
\begin{itemize}
\item[(i)] there exists $M>0$ such that
\[
t^2(\partial_t\lambda_1(t,\xi))^2+t^2(\partial_t\lambda_2(t,\xi))^2\le M(\lambda_1(t,\xi)-\lambda_2(t,\xi))^2,\qquad t\in[0,T],\, |\xi|\ge 1;
\]
\item[(ii)] there exists $M>0$ such that
\[
\lambda_1^2(t,\xi)+\lambda_2^2(t,\xi)\le M(\lambda_1(t,\xi)-\lambda_2(t,\xi))^2,\qquad t\in[0,T],\, |\xi|\ge 1.
\]
\end{itemize}
\end{proposition}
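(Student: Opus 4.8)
The plan is to compute everything explicitly for $m=2$, where the symmetriser $Q$ of the companion matrix $A$ has a simple closed form, and then to reduce the hypothesis \eqref{GR1m} to conditions on the roots $\lambda_1,\lambda_2$. For $m=2$ with eigenvalues $\lambda_1,\lambda_2$ (the renormalised roots), one has $\Delta(t,\xi)=\det Q = (\lambda_1-\lambda_2)^2$, and the check function $\psi$ is $d_2$ from the generalised Hamilton–Cayley polynomial $\det(\lambda Q-\partial_t Q)=d_0\lambda^2+d_1\lambda+d_2$, so here $d_2=\det(\partial_t Q)$. Using the identity $\lambda_1^2(\text{H-C})+\lambda_2^2(\text{H-C})=(d_1/d_0)^2-2d_2/d_0$ recalled in the paper, but more directly for $m=2$ I would just write $Q$ out in terms of $h_1,h_2$ (equivalently $\lambda_1+\lambda_2$ and $\lambda_1\lambda_2$), differentiate in $t$, and read off that $\psi = d_2$ is, up to a nonvanishing factor and bounded terms, comparable to $(\partial_t\lambda_1)^2+(\partial_t\lambda_2)^2$ times lower-order symbol factors — this is essentially the content of ``$d_2$ controls $\sum\lambda_i^2$'' specialised to two roots. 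The first step is therefore to establish the comparability
\[
|\psi(t,\xi)| \;\asymp\; \bigl((\partial_t\lambda_1)^2+(\partial_t\lambda_2)^2\bigr)\,\Delta(t,\xi)\Big/\bigl(\text{bounded, bounded below}\bigr)
\]
on $[0,T]\times\{|\xi|\ge 1\}$, or more precisely that $|\psi|\le C\widetilde\Delta$ is equivalent to a bound of $\sum(\partial_t\lambda_i)^2$ by a multiple of $\widetilde\Delta/\text{(something bounded)}$.

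Next I would combine this with the already-established two-sided bound \eqref{Delta2}, namely $\widetilde\Delta(t,\xi)\asymp t^{-2}\Delta(t,\xi)$ on $[0,T]$ for $|\xi|\ge1$ (valid under the assumption that the roots coincide only at $t=0$ with value $0$, via writing $\Delta=t^k g$ with $g$ bounded and bounded below). Substituting $\widetilde\Delta\asymp t^{-2}(\lambda_1-\lambda_2)^2$ into the bound from the first step immediately turns \eqref{GR1m} into
\[
t^2\bigl((\partial_t\lambda_1)^2+(\partial_t\lambda_2)^2\bigr)\le M(\lambda_1-\lambda_2)^2,\qquad t\in[0,T],\ |\xi|\ge1,
\]
which is condition (i). So the equivalence \eqref{GR1m}$\Leftrightarrow$(i) is essentially a bookkeeping consequence of the first step plus \eqref{Delta2}; I would present it as such, being careful that the ``bounded below'' factors that appear (like $\lambda_2\cdots\lambda_m$ in Lemma's proof, here trivial, and the factor $g$) are genuinely bounded away from zero uniformly in $\xi$, which is where the hypothesis $\lambda_1(0,\xi)=\lambda_2(0,\xi)=0$ and analyticity get used.

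For the equivalence (i)$\Leftrightarrow$(ii), the idea is that since $\lambda_1,\lambda_2$ are analytic in $t$, vanish at $t=0$, and coincide only there, both $\lambda_i(t,\xi)$ and $t\,\partial_t\lambda_i(t,\xi)$ vanish at $t=0$ to comparable orders; more precisely, writing $\lambda_i(t,\xi)=t\,\mu_i(t,\xi)$ is not quite right since $\lambda_i$ need not vanish simply, so instead I would argue directly that $t\partial_t\lambda_i$ and $\lambda_i$ are comparable in absolute value on a small interval $[0,T]$ (Taylor expanding, or: $t\partial_t(t^N h)=Nt^N h+t^{N+1}h' \asymp t^N h$ for $T$ small), hence $t^2\sum(\partial_t\lambda_i)^2 \asymp \sum\lambda_i^2$. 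Feeding this into (i) yields (ii) and conversely. The \textbf{main obstacle} I anticipate is the first step: correctly identifying $\psi=d_2$ for the symmetriser of the $2\times2$ companion matrix and showing its comparability with $(\partial_t\lambda_1)^2+(\partial_t\lambda_2)^2$ with uniform-in-$\xi$ constants — one must be sure that the polynomial coefficients of $Q$ in the $\lambda_i$ contribute only bounded, bounded-below factors on $|\xi|\ge1$, and that no spurious cancellation or degeneration occurs as $|\xi|\to\infty$ or as $t\to0$. Everything after that is the two-sided estimate \eqref{Delta2} and the elementary analyticity fact that $t\partial_t\lambda_i\asymp\lambda_i$ near the common zero.
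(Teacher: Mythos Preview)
Your plan has a genuine gap at the ``first step''. For $m=2$ the symmetriser is
\[
Q=\begin{pmatrix}\lambda_1^2+\lambda_2^2 & -(\lambda_1+\lambda_2)\\ -(\lambda_1+\lambda_2) & 2\end{pmatrix},
\]
so $\partial_t Q$ has a zero in the $(2,2)$-entry and
\[
\psi=d_2=\det(\partial_t Q)=-(\partial_t\lambda_1+\partial_t\lambda_2)^2.
\]
Thus $|\psi|$ is the square of the \emph{sum} of the derivatives, not comparable to the sum of squares $(\partial_t\lambda_1)^2+(\partial_t\lambda_2)^2$; your asserted comparability (and the version with an extra $\Delta$ factor) is false. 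A concrete obstruction: if $\lambda_1+\lambda_2\equiv 0$ then $\psi\equiv 0$ and \eqref{GR1m} is vacuous, yet $(\partial_t\lambda_1)^2+(\partial_t\lambda_2)^2=2(\partial_t\lambda_1)^2$ need not vanish. Your route gives no information about (i) in this case.

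What the paper does to bridge this is exactly the missing idea: since $\lambda_1-\lambda_2=\pm\sqrt{\Delta}$, one has $(\partial_t\lambda_1-\partial_t\lambda_2)^2=(\partial_t\sqrt{\Delta})^2=(\partial_t\Delta)^2/(4\Delta)\le \widetilde\Delta$ \emph{automatically}, with no hypothesis needed. Hence \eqref{GR1m} controls $(\partial_t\lambda_1+\partial_t\lambda_2)^2$ by $\widetilde\Delta$, the definition of $\widetilde\Delta$ controls $(\partial_t\lambda_1-\partial_t\lambda_2)^2$ by $\widetilde\Delta$, and adding gives $(\partial_t\lambda_1)^2+(\partial_t\lambda_2)^2\le C\widetilde\Delta$; then \eqref{Delta2} yields (i). Conversely, $(\partial_t\lambda_1+\partial_t\lambda_2)^2\le 2\bigl((\partial_t\lambda_1)^2+(\partial_t\lambda_2)^2\bigr)$ is trivial, so the reverse direction is easy once you have (i) (or (ii) combined with $t\,\partial_t\lambda_i\asymp\lambda_i$). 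Your treatment of (i)$\Leftrightarrow$(ii) via the analyticity fact $t\,\partial_t\lambda_i\asymp\lambda_i$ is fine and in fact slightly more direct than the paper's Taylor-expansion argument for (i)$\Rightarrow$(ii); but without the sum/difference decomposition above, the link between \eqref{GR1m} and (i) does not go through.
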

\begin{proof}
We recall that $\tau_1(t,\xi)$ and $\tau_2(t,\xi)$ are the roots of the characteristic polynomial
\[
\tau^2-\sum_{|\nu|=1} a_{\nu,|\nu|}(t)\xi^\nu\tau-\sum_{|\nu|=2} a_{\nu,|\nu|}(t)\xi^\nu.
\]
Hence,
\[
\lambda_1(t,\xi)=\frac{\sum_{|\nu|=1} a_{\nu,|\nu|}(t)\xi^\nu\lara{\xi}^{-1}-\sqrt{\Delta(t,\xi)}}{2}
\]
and
\[
\lambda_2(t,\xi)=\frac{\sum_{|\nu|=1} a_{\nu,|\nu|}(t)\xi^\nu\lara{\xi}^{-1}+\sqrt{\Delta(t,\xi)}}{2},
\]
where
\[
\Delta(t,\xi)=\biggl(\sum_{|\nu|=1} a_{\nu,|\nu|}(t)\xi^\nu\biggr)^2\lara{\xi}^{-2}+4\sum_{|\nu|=2} a_{\nu,|\nu|}(t)\xi^\nu\lara{\xi}^{-2}.
\]
Clearly,
\[
(\lambda_1(t,\xi)-\lambda_2(t,\xi))^2=\Delta(t,\xi).
\]
Moreover,
\[
(\partial_t\lambda_1(t,\xi))^2+(\partial_t\lambda_2(t,\xi))^2 \le c (\partial_t\lambda_1(t,\xi)+\partial_t\lambda_2(t,\xi))^2+c(\partial_t\sqrt{\Delta(t,\xi)})^2.
\]
By definition $|\psi(t,\xi)|=(\partial_t\lambda_1(t,\xi)+\partial_t\lambda_2(t,\xi))^2$, hence \eqref{GR1m} implies
\beq
\label{estimate2}
(\partial_t\lambda_1(t,\xi))^2+(\partial_t\lambda_2(t,\xi))^2 \le c \biggl(\Delta(t,\xi)+\frac{(\partial_t\Delta(t,\xi))^2}{\Delta(t,\xi)}\biggl)=c\wt{\Delta}(t,\xi).
\eeq
From the previous observation \eqref{Delta2} on $\wt{\Delta}$ we have that $\wt{\Delta}(t,\xi)\le c_2(1/t^2)\Delta(t,\xi)$ on $[0,T]$ for $|\xi|\ge 1$. Applied to \eqref{estimate2} this proves that \eqref{GR1m} implies $(i)$.

Assume that $(i)$ holds. By Taylor's expansion and adding and subtracting the term $\partial_t\lambda_1(\theta_2t,\xi)$ we can write
\begin{multline*}
(\lambda_1(t,\xi))^2+(\lambda_2(t,\xi))^2=
(\partial_t\lambda_1(\theta_1t,\xi)t)^2+(\partial_t\lambda_2(\theta_2t,\xi)t)^2\\
\le 2t^2(\partial_t\lambda_1(\theta_1t,\xi)-\partial_t\lambda_1(\theta_2t,\xi))^2+
2t^2(\partial_t\lambda_1(\theta_2t,\xi))^2+t^2(\partial_t\lambda_2(\theta_2t,\xi))^2.
\end{multline*}
The hypothesis $(i)$ yields
\begin{multline}
\label{est1-2}
(\lambda_1(t,\xi))^2+(\lambda_2(t,\xi))^2 \le \\
c t^2(\partial_t\lambda_1(\theta_1t,\xi)-\partial_t\lambda_1(\theta_2t,\xi))^2 +
c (\lambda_1(\theta_2t,\xi)-\lambda_2(\theta_2t,\xi))^2.
\end{multline}
Both the summands in the right hand-side of \eqref{est1-2} are bounded by $(\lambda_1(t,\xi)-\lambda_2(t,\xi))$. Indeed, recalling that $\lambda_1(t,\xi)=t^{k_1}g_1(t,\xi)$, $\lambda_2(t,\xi)=t^{k_2}g_2(t,\xi)$, where $g_1(t,\xi)\neq 0$, $g_2(t,\xi)\neq 0$ and the functions $k_1(\xi), k_2(\xi)$ are positive and bounded, we obtain
\begin{multline*}
t^2\frac{(\partial_t\lambda_1(\theta_1t,\xi)-\partial_t\lambda_1(\theta_2t,\xi))^2}{\Delta(t,\xi)}\\
= t^2\frac{(k_1(\theta_1t)^{k_1-1}g_1(\theta_1t,\xi)+(\theta_1 t)^{k_1}\partial_t g_1(\theta_1 t,\xi)-k_2(\theta_2t)^{k_2-1}g_2(\theta_2t,\xi)-(\theta_2 t)^{k_2}\partial_t g_2(\theta_2 t,\xi))^2}{(t^{k_1}g_1(t,\xi)-t^{k_2}g_2(t,\xi))^2}\\
\le \frac{c_{g_1} t^{2k_1}+ c_{g_2}t^{2k_2}}{t^{2k}(t^{k_1-k}g_1(t,\xi)-t^{k_2-k}g_2(t,\xi))^2},
\end{multline*}
where $k=\min\{k_1,k_2\}$. Since the functions $g_1$ and $g_2$ are bounded with $|g_1(t,\xi)|\ge c_1$ and $|g_2(t,\xi)|\ge c_2$ for $t\in[0,T]$ and $|\xi|\ge 1$, we have that the bound from below
\[
|t^{k_1-k}g_1(t,\xi)-t^{k_2-k}g_2(t,\xi)|\ge c_3
\]
holds uniformly in $|\xi|\ge 1$ and $t\in[0,T]$, when $T$ is sufficiently small. Thus,
\[
t^2\frac{(\partial_t\lambda_1(\theta_1t,\xi)-\partial_t\lambda_1(\theta_2t,\xi))^2}{\Delta(t,\xi)}\le \frac{c_{g_1} t^{2k_1}+ c_{g_2}t^{2k_2}}{t^{2k}(t^{k_1-k}g_1(t,\xi)-t^{k_2-k}g_2(t,\xi))^2}\le C.
\]
Analogously,
\begin{multline*}
\frac{(\lambda_1(\theta_2t,\xi)-\lambda_2(\theta_2t,\xi))^2}{\Delta(t,\xi)}=
\frac{(\lambda_1(\theta_2t,\xi)-\lambda_2(\theta_2t,\xi))^2}{(\lambda_1(t,\xi)-\lambda_2(t,\xi))^2}\\
= \frac{((\theta_2t)^{k_1}g_1(\theta_2t,\xi)-(\theta_2t)^{k_2}g_2(\theta_2t,\xi))^2}
{(t^{k_1}g_1(t,\xi)-t^{k_2}g_2(t,\xi))^2}\\
\le \frac{c_{g_1}t^{2k_1}+c_{g_2}t^{2k_2}}{t^{2k}(t^{k_1-k}g_1(t,\xi)-t^{k_2-k}g_2(t,\xi))^2}\le C.
\end{multline*}
This completes the proof of $(i)$ implies $(ii)$.

It remains to prove that $(ii)$ implies \eqref{GR1m}. We recall that from Lemma 6.2 in \cite{GR:12}, applied to  $\partial_t\lambda_1$ and $\partial_t\lambda_2$, the estimate
\[
t^2|\partial_t\lambda_i(t,\xi)|\le c |\lambda_i(t,\xi)|
\]
holds for $i=1,2$, uniformly in $t\in[0,T]$ and $|\xi|\ge 1$. Then, under the hypothesis $(ii)$ we get
\begin{multline*}
t^2|\psi(t,\xi)|=t^2(\partial_t\lambda_1(t,\xi)+\lambda_2(t,\xi))^2\le 2t^2(\partial_t\lambda_1(t,\xi))^2+2t^2(\partial_t\lambda_2(t,\xi))^2
\\
\le c\lambda_1^2(t,\xi)+c\lambda_2^2(t,\xi)\le M(\lambda_1(t,\xi)-\lambda_2(t,\xi))^2 =M \Delta(t,\xi).
\end{multline*}
Finally, from \eqref{Delta2} we conclude that $|\psi(t,\xi)|\le C\wt{\Delta}(t,\xi)$. This completes the proof.
\end{proof}

We assume now for simplicity that the lower order terms in \eqref{CP} are real-valued.
We want to compare the Levi condition in \eqref{LCbr} with the corresponding Levi condition
introduced in \cite{GR:12}, which assumes
\beq
\label{LC2}
|b_j(t,\xi)|^2\le c q_{jj}(t,\xi),\qquad j=1,...,m.
\eeq
In general \eqref{LC2} is weaker then \eqref{LCbr}, in the sense that \eqref{LCbr} implies \eqref{LC2}. 
However, for the second order equations we have:
\begin{proposition}
\label{propLC5}
Let $m=2$. If the roots $\lambda_1, \lambda_2$ fulfil
\beq
\label{cond2}
\lambda_1^2(t,\xi)+\lambda_2^2(t,\xi)\le M (\lambda_1(t,\xi)-\lambda_2(t,\xi))^2,\qquad t\in[a,b],\, |\xi|\ge 1,
\eeq
 then the Levi condition \eqref{LC2} is equivalent to
\beq
\label{LCB2}
|(q_{12}b_2-b_1q_{22})(t,\xi)|^2\le c\,\Delta(t,\xi)
\eeq
for $t\in[a,b]$ and $|\xi|\ge 1$.
\end{proposition}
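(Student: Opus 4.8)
The plan is to reduce everything to the explicit form of the symmetriser in the case $m=2$ and then to elementary pointwise inequalities in $\lambda_1,\lambda_2$. First I would record that, since the eigenvalues of $A(t,\xi)$ are the renormalised roots $\lambda_1(t,\xi),\lambda_2(t,\xi)$, the symmetriser --- which for $m=2$ is the Bezout matrix of the couple $(P,\partial_\tau P)$ --- is
\[
Q(t,\xi)=\begin{pmatrix} \lambda_1^2+\lambda_2^2 & -(\lambda_1+\lambda_2)\\ -(\lambda_1+\lambda_2) & 2\end{pmatrix},
\]
so that $q_{11}=\lambda_1^2+\lambda_2^2$, $q_{12}=q_{21}=-(\lambda_1+\lambda_2)$, $q_{22}=2$ and $\Delta(t,\xi)=\det Q(t,\xi)=(\lambda_1-\lambda_2)^2$; this is checked directly from $QA-A^\ast Q=0$ and $\det Q=(\lambda_1-\lambda_2)^2$. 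Since $q_{22}$ is a positive constant and $b_2$ is bounded on $[a,b]\times\{|\xi|\ge1\}$, the component $j=2$ of \eqref{LC2} holds automatically, hence \eqref{LC2} is equivalent to the single estimate $|b_1(t,\xi)|^2\le c\,q_{11}(t,\xi)=c(\lambda_1^2+\lambda_2^2)$.

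Next I would prove the implication \eqref{LCB2}$\Rightarrow$\eqref{LC2}. Using $q_{22}=2$, the triangle inequality and \eqref{LCB2},
\[
2|b_1|=|q_{22}b_1|\le|q_{12}b_2-q_{22}b_1|+|q_{12}|\,|b_2|\le\sqrt{c}\,|\lambda_1-\lambda_2|+|b_2|\,|\lambda_1+\lambda_2|.
\]
Squaring, using that $|b_2|$ is bounded and the elementary bounds $(\lambda_1\pm\lambda_2)^2\le2(\lambda_1^2+\lambda_2^2)$, I obtain $|b_1|^2\le c'(\lambda_1^2+\lambda_2^2)=c'q_{11}$, i.e.\ \eqref{LC2}. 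Note that this direction does \emph{not} use \eqref{cond2}; it is the general fact that a (squared form of the) Levi condition on $QB-B^\ast Q$ implies \eqref{LC2}, specialised to $m=2$.

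For the converse \eqref{LC2}$\Rightarrow$\eqref{LCB2} I would expand
\[
|q_{12}b_2-q_{22}b_1|^2\le 2q_{12}^2\,b_2^2+2q_{22}^2\,b_1^2\le c(\lambda_1+\lambda_2)^2+c\,|b_1|^2,
\]
bound $(\lambda_1+\lambda_2)^2\le2(\lambda_1^2+\lambda_2^2)$ and insert the estimate $|b_1|^2\le c\,q_{11}=c(\lambda_1^2+\lambda_2^2)$ coming from \eqref{LC2}; this gives $|q_{12}b_2-q_{22}b_1|^2\le c''(\lambda_1^2+\lambda_2^2)$. At this point the hypothesis \eqref{cond2} is exactly what is needed to replace $\lambda_1^2+\lambda_2^2$ by $M(\lambda_1-\lambda_2)^2=M\Delta$, which yields \eqref{LCB2}. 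All constants are uniform in $t\in[a,b]$ and $|\xi|\ge1$.

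The computations are completely elementary, so I do not anticipate a real obstacle: the only genuine points are (a) writing down the symmetriser correctly and observing that $q_{22}$ is constant, so that the $j=2$ part of \eqref{LC2} is vacuous and \eqref{LC2} collapses to a single inequality for $b_1$; and (b) recognising that hypothesis \eqref{cond2} enters only in the direction \eqref{LC2}$\Rightarrow$\eqref{LCB2}, precisely to upgrade a bound in terms of $\lambda_1^2+\lambda_2^2$ to a bound in terms of $\Delta$. Everything else is bookkeeping of uniform constants.
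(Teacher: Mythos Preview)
Your proof is correct and follows essentially the same route as the paper: write down the explicit $2\times 2$ symmetriser, observe that the $j=2$ part of \eqref{LC2} is automatic, and then prove each implication by a triangle-inequality / $(a+b)^2\le 2a^2+2b^2$ argument reducing everything to bounds by $\lambda_1^2+\lambda_2^2$, with \eqref{cond2} used to pass from $\lambda_1^2+\lambda_2^2$ to $\Delta$. The paper phrases the first step as ``$\Delta$ is equivalent to $\lambda_1^2+\lambda_2^2$'' and invokes this in both directions, whereas you correctly observe that only the trivial inequality $(\lambda_1-\lambda_2)^2\le 2(\lambda_1^2+\lambda_2^2)$ is needed for \eqref{LCB2}$\Rightarrow$\eqref{LC2}, so that \eqref{cond2} is genuinely used only for \eqref{LC2}$\Rightarrow$\eqref{LCB2}; this is a small but accurate sharpening of the paper's presentation.
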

\begin{proof}
We recall that in this case the symmetriser is given by the matrix
\[
Q=\left(
    \begin{array}{cc}
      \lambda_1^2+\lambda_2^2 & -\lambda_1-\lambda_2\\
       -\lambda_1-\lambda_2 & 2\\
    \end{array}
  \right).
\]
From \eqref{cond2} it is clear that $\Delta(t,\xi)$ is equivalent to $\lambda_1^2(t,\xi)+\lambda_2^2(t,\xi)$. We begin by writing
\begin{multline*}
|2b_1(t,\xi)|^2\le |q_{22}(t,\xi)b_1(t,\xi)-q_{12}(t,\xi)b_2(t,\xi)+q_{12}(t,\xi)b_2(t,\xi)|^2\\
\le 2|q_{22}(t,\xi)b_1(t,\xi)-q_{12}(t,\xi)b_2(t,\xi)|^2+2|q_{12}(t,\xi)b_2(t,\xi)|^2.
\end{multline*}
Using the definition of the entries of $Q$ given above, the fact that $\Delta$ is equivalent to $\lambda_1^2+\lambda_2^2$ and the boundedness of $b_2$, we obtain under the assumption \eqref{LCB2} that
\[
|b_1(t,\xi)|^2\le c (\Delta(t,\xi)+(\lambda_1(t,\xi)+\lambda_2(t,\xi))^2)\le c'(\lambda_1^2(t,\xi)+\lambda_2^2(t,\xi)) = c'\, q_{11}(t,\xi).
\]
This shows that \eqref{LCB2} implies \eqref{LC2} for $j=1$. For $j=2$ condition \eqref{LC2} means $|b_2(t,\xi)|\le C$ which is trivially satisfied. Conversely, if \eqref{LC2} holds then
\begin{multline*}
|q_{22}(t,\xi)b_1(t,\xi)-q_{12}(t,\xi)b_2(t,\xi)|^2\le 2 |q_{22}(t,\xi)b_1(t,\xi)|^2+2|q_{12}(t,\xi)b_2(t,\xi)|^2\\
\le c (q_{11}(t,\xi)+q_{12}^2(t,\xi))\le c' (\lambda_1^2(t,\xi)+\lambda_2^2(t,\xi)),
\end{multline*}
which implies that \eqref{LCB2} is valid.
\end{proof}
Clearly $\Delta^2(t,\xi)\le c\Delta(t,\xi)$ when $t\in[a,b]$ and $|\xi|\ge 1$, so \eqref{LCbr}$\Rightarrow$\eqref{LCB2}$\Leftrightarrow$\eqref{LC2} under the assumption on the roots \eqref{cond2}.
Proposition \ref{propLC5} also shows that the Levi conditions on the lower order terms can be improved by making use of the hypothesis \eqref{GR1m}. This requires a precise knowledge of the relation between the roots of the equation and the check function $\psi$.

\bibliographystyle{abbrv}

\begin{thebibliography}{10}

\bibitem{CDS}
F. Colombini, E. De Giorgi and S. Spagnolo,
Sur les \'equations hyperboliques avec des coefficients qui ne d\'ependent que du temps.
{\em Ann. Scuola Norm. Sup. Pisa Cl. Sci.} 6 (1979), 511--559.

\bibitem{CS}
F. Colombini and S. Spagnolo,
An example of a weakly hyperbolic Cauchy problem not well posed in $C^\infty$.
{\em Acta Math.} 148 (1982), 243--253.

\bibitem{GR:11}
C. Garetto and M. Ruzhansky,
On the well-posedness of weakly hyperbolic equations with time dependent coefficients.
{\em J. Differential Equations}, 253 (2012), 1317--1340.

\bibitem{GR:12}
C. Garetto and M. Ruzhansky,
Weakly hyperbolic equations with non-analytic coefficients and lower order terms.
{\em arXiv:1205.5644v1}, to appear in {\em Math. Ann.}

\bibitem{J:89}
E. Jannelli,
On the symmetrization of the principal symbol of hyperbolic equations.
{\em Comm. Partial Differential Equations}, 14 (1989), 1617--1634.

\bibitem{J:09}
E. Jannelli,
The hyperbolic symmetrizer: theory and applications.
{\em in Advances in Phase Space Analysis of PDEs}, Birkh\"auser, 78 (2009), 113--139.

\bibitem{JT}
E. Jannelli and G. Taglialatela,
Homogeneous weakly hyperbolic equations with time dependent analytic coefficients.
{\em J. Differential Equations}, 251 (2011), 995--1029.

\end{thebibliography}
\newcommand{\SortNoop}[1]{}

\end{document}